\documentclass[12pt]{article}
\usepackage{amssymb}
\usepackage{amsmath}
\usepackage{amsthm}
\usepackage{color}
\usepackage{comment}
\usepackage{geometry}		
\usepackage{graphicx}

\makeatletter
	\@addtoreset{equation}{section}
\makeatother

\geometry{
	hmargin={25mm,25mm},
	vmargin={25mm,30mm}   
}

\let\originalleft\left
\let\originalright\right
\renewcommand{\left}{\mathopen{}\mathclose\bgroup\originalleft}
\renewcommand{\right}{\aftergroup\egroup\originalright}

\begin{document}

\newcommand\cF{\mathcal{F}}
\newcommand\cG{\mathcal{G}}
\newcommand\cX{\mathcal{X}}
\newcommand\cY{\mathcal{Y}}
\newcommand\cZ{\mathcal{Z}}

\newcommand{\dfem}[1]{{\bf #1}}

\newtheorem{theorem}{Theorem}[section]
\newtheorem{corollary}[theorem]{Corollary}
\newtheorem{lemma}[theorem]{Lemma}
\newtheorem{proposition}[theorem]{Proposition}

\theoremstyle{definition}
\newtheorem{definition}{Definition}[section]
\newtheorem{example}[definition]{Example}

\theoremstyle{remark}
\newtheorem{remark}{Remark}[section]




\title{
The necessity of the sausage-string structure for mode-locking regions of piecewise-linear maps.
}
\author{
D.J.W.~Simpson\\\\
School of Mathematical and Computational Sciences\\
Massey University\\
Palmerston North, 4410\\
New Zealand
}
\maketitle


\begin{abstract}

Piecewise-smooth maps are used as discrete-time models of dynamical systems
whose evolution is governed by different equations under different conditions (e.g.~switched control systems).
By assigning a symbol to each region of phase space where the map is smooth,
any period-$p$ solution of the map can be associated to an itinerary of $p$ symbols.
As parameters of the map are varied,
changes to this itinerary occur at border-collision bifurcations (BCBs) where 
one point of the periodic solution collides with a region boundary.
It is well known that BCBs conform broadly to two cases:
{\em persistence}, where the symbolic itinerary of a periodic solution changes by one symbol,
and a {\em nonsmooth-fold}, where two solutions differing by one symbol collide and annihilate.
This paper derives new properties of periodic solutions of piecewise-linear continuous maps on $\mathbb{R}^n$
to show that under mild conditions
BCBs of mode-locked solutions on invariant circles must be nonsmooth-folds.
This explains why Arnold tongues of piecewise-linear maps exhibit a sausage-string structure
whereby changes to symbolic itineraries occur at codimension-two pinch points instead of codimension-one persistence-type BCBs.
But the main result is based on the combinatorical properties of the itineraries,
so the impossibility of persistence-type BCBs also holds when the periodic solution is unstable or there is no invariant circle.

\end{abstract}

\section{Introduction}
\label{sec:intro}

Many natural phenomena involve one oscillatory system driving another.
Examples include tidal currents driven by the moon's rotation \cite{Ge19},
circadian rhythms driven by the spin of the Earth \cite{En80},
and bridge sway caused by the movement of people \cite{Ga15}.
Often the motion of the driven system becomes {\em mode-locked} to that of the driving system,
meaning both systems are periodic with the same period or periods admitting a rational ratio.
In two-parameter bifurcation diagrams, mode-locking occurs in {\em Arnold tongues}.
Fig.~\ref{fig:nopeSmoothTongues} shows three Arnold tongues for the two-dimensional quadratic map
\begin{equation}
\begin{bmatrix} x_1 \\ x_2 \end{bmatrix} \mapsto
\begin{bmatrix} \alpha x_1 + x_2 + 1 \\ \beta x_1 - \frac{1}{2} x_1^2 \end{bmatrix},
\label{eq:exampleMapSmooth}
\end{equation}
where $\alpha$ and $\beta$ are parameters.
In this abstract setting the presence of a driving system is obscured but this is not important for our purposes.
Arnold tongues have been described in mathematical models of diverse applications,
such as lasers \cite{JaJi19}, neurons \cite{CoBr99}, and cardiac dynamics \cite{GlGo87}.

\begin{figure}[b!]
\begin{center}
\includegraphics[width=17cm]{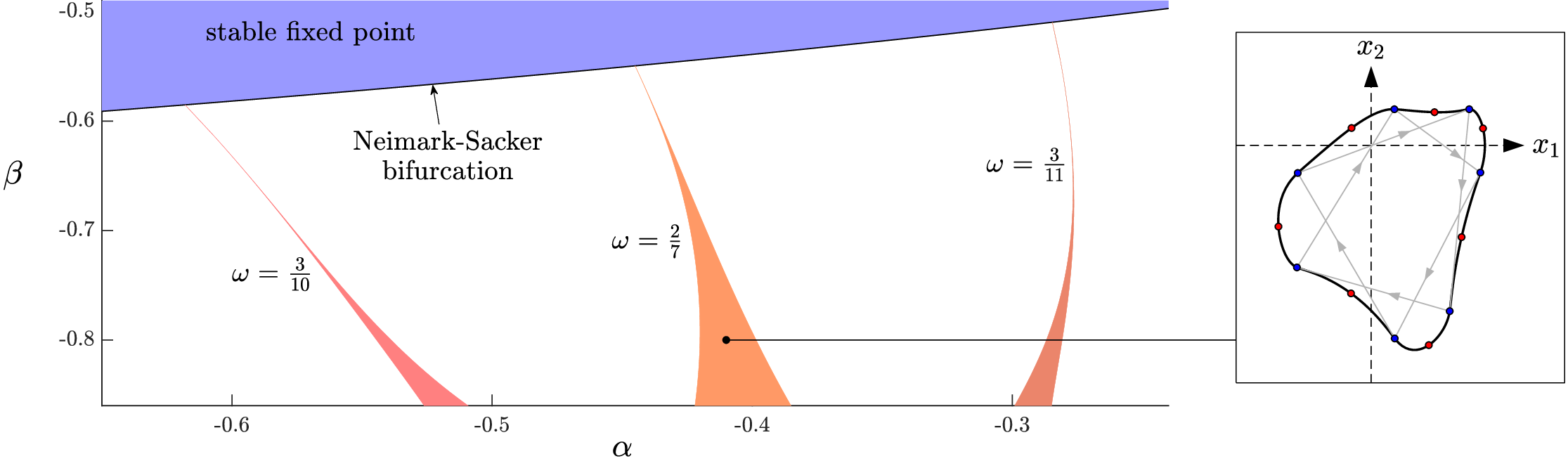}
\caption{
A two-parameter bifurcation diagram of the smooth map \eqref{eq:exampleMapSmooth}.
This map has a stable fixed point throughout the blue region which
is bounded by a curve of Neimark-Sacker bifurcations.
On this curve the stability multipliers associated with the fixed point are ${\rm e}^{2 \pi {\rm i} \omega}$
where the value of $\omega \in (0,1)$ values continuously along the curve.
Arnold tongues emanate from points on the curve at which $\omega$ is rational.
Three tongues are shown (computed numerically by continuing their boundaries
where the associated stable periodic solution undergoes a saddle-node bifurcation).
For a point in the middle tongue we provide a phase portrait
showing the stable (blue) and saddle (red) periodic solutions on the attracting invariant circle (black).
The grey lines show how each point of the stable periodic solution maps to the next point
giving a rotation number of $\omega = \frac{2}{7}$.
\label{fig:nopeSmoothTongues}
} 
\end{center}
\end{figure}

Below the Neimark-Sacker bifurcation curve in Fig.~\ref{fig:nopeSmoothTongues} the map has an attracting invariant circle.
Arnold tongues are where this circle contains stable and saddle solutions of some fixed period $p$ and rotation number $\omega = \frac{m}{p}$;
elsewhere the dynamics on the circle is quasi-periodic.
Additional bifurcations occur at more negative values of $\beta$,
such as period-doubling and the destruction of the invariant circle \cite{Ar88,ArCh82,MePa93},
but these are not our concern here.    
The main point is that in Fig.~\ref{fig:nopeSmoothTongues}
and for models where the equations of motion are smooth,
each tongue is usually a connected set.
This contrasts Arnold tongues of piecewise-linear maps that usually display a sausage-string structure.
Fig.~\ref{fig:nopePWLTongues} illustrates this for the map
\begin{equation}
\begin{bmatrix} x_1 \\ x_2 \end{bmatrix} \mapsto
\begin{bmatrix} \alpha x_1 + x_2 + 1 \\ \beta x_1 - \frac{1}{2} |x_1| \end{bmatrix},
\label{eq:exampleMapPWS}
\end{equation}
obtained from the previous example by replacing $x_1^2$ with $|x_1|$.
Each tongue has zero width at pinch points termed {\em shrinking points} and is a disjoint union of `sausages'.
This has been described for diverse applications including
power converters \cite{ZhMo03}, trade cycles \cite{GaGa03},
and mechanical oscillators subject to dry friction \cite{SzOs09}.
Note that more tongues have been shown in Fig.~\ref{fig:nopePWLTongues} than Fig.~\ref{fig:nopeSmoothTongues}
simply because in the piecewise-linear setting it is considerably easier to compute them to a reasonable degree of precision.

\begin{figure}[b!]
\begin{center}
\includegraphics[width=17cm]{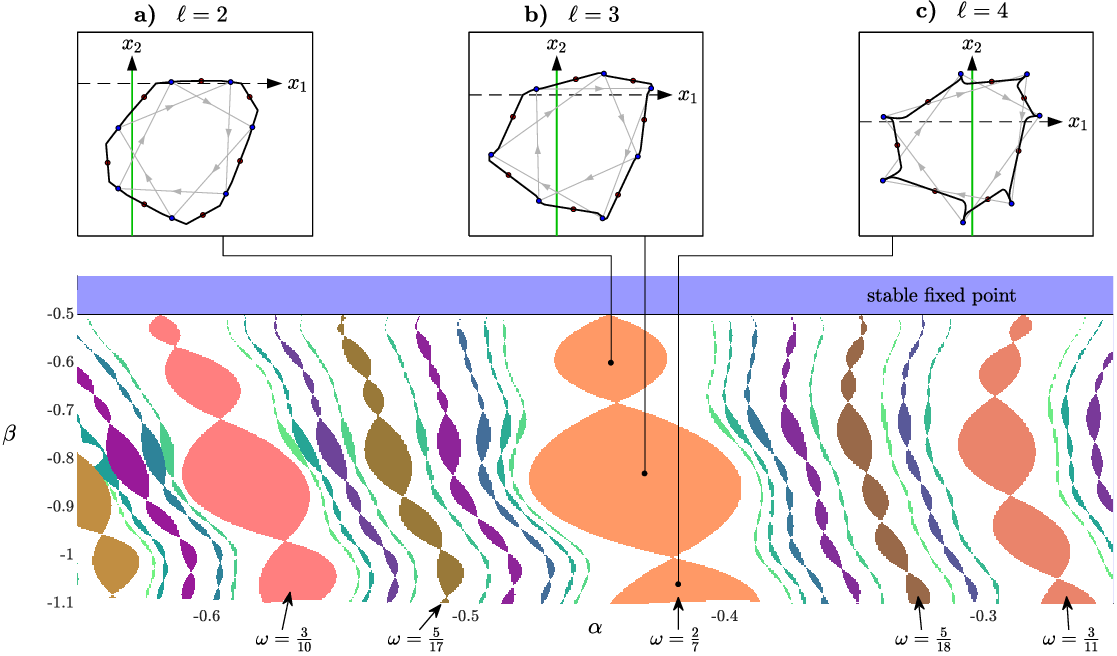}
\caption{
A two-parameter bifurcation diagram of the piecewise-linear map \eqref{eq:exampleMapPWS}.
This map has a stable fixed point throughout the blue region bounded by the line $\beta = -0.5$.
This line is a piecewise-linear analogue of a curve of Neimark-Sacker bifurcations \cite{SiMe08b,SuGa08}
where the fixed point has stability multipliers ${\rm e}^{2 \pi {\rm i} \omega}$
with $\omega = \frac{1}{2 \pi} \cos^{-1} \left( \frac{\alpha}{2} \right)$.
Arnold tongues emanate from points where $\omega$ is rational
and these have been computed numerically up to period $50$
(by solving for the periodic solutions algebraically).
Phase portraits for three points in the $\omega = \frac{2}{7}$ tongue are also shown.
\label{fig:nopePWLTongues}
} 
\end{center}
\end{figure}

This paper presents new results for $n$-dimensional continuous maps of the form
\begin{equation}
x \mapsto \begin{cases}
A_L x + b, & x_1 \le 0, \\
A_R x + b, & x_1 \ge 0,
\end{cases}
\label{eq:pwl}
\end{equation}
where $x = (x_1,x_2,\ldots,x_n)$,
$A_L$ and $A_R$ are $n \times n$ matrices differing only in their first columns (for continuity),
and $b \in \mathbb{R}^n$.
The previous example \eqref{eq:exampleMapPWS} is a two-dimensional subfamily of \eqref{eq:pwl}.
As in the smooth setting, Arnold tongues
correspond to stable periodic solutions on invariant circles with a given rotation number,
but for the piecewise-linear map \eqref{eq:pwl}
it is helpful to also consider the number of points $\ell$ that the solution has in the left half-plane $x_1 < 0$.
In general as we move from one sausage to the next the value of $\ell$ changes by one,
as shown in Fig.~\ref{fig:nopePWLTongues} for the $\omega = \frac{2}{7}$ tongue.
These changes occur at codimension-two points,
and there is substantial mathematical theory explaining {\em how} these occur \cite{SiMe09,Si17c,Si18e},
but the theory does not explain {\em why} the changes must occur in this way.
For instance, why can't increments in $\ell$ occur in a codimension-one fashion, as suggested in Fig.~\ref{fig:nopePartitionedTongue}
where one point of the periodic solution simply crosses the switching manifold?
Certainly such bifurcations are possible when there is no attracting invariant circle.
Fig.~\ref{fig:nopeTonguesSpecial10} shows an example using
\begin{equation}
A_L = \begin{bmatrix} \tau_L & 1 \\ -\delta_L & 0 \end{bmatrix}, \quad
A_R = \begin{bmatrix} \tau_R & 1 \\ -\delta_R & 0 \end{bmatrix}, \quad
b = \begin{bmatrix} 1 \\ 0 \end{bmatrix}.
\label{eq:exampleNonrotational}
\end{equation}
The bifurcation is a border-collision bifurcation (BCB) of persistence type
whereby one point of a stable period-$9$ solution crosses $x_1 = 0$
and the number of points the solution has in $x_1 < 0$ changes from $\ell = 3$ to $\ell = 4$.

\begin{figure}[b!]
\begin{center}
\includegraphics[width=5cm]{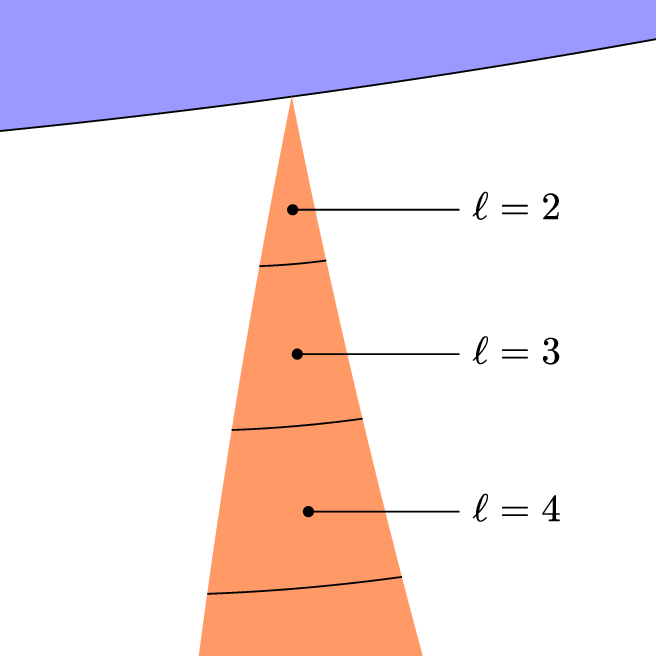}
\caption{
A theorised Arnold tongue of a piecewise-linear map that we show cannot arise.
The tongue is divided by curves of persistence-type border-collision bifurcations
where the number $\ell$ of points of the stable periodic solution located in $x_1 < 0$ changes by one.
\label{fig:nopePartitionedTongue}
} 
\end{center}
\end{figure}

\begin{figure}[t!]
\begin{center}
\includegraphics[width=8cm]{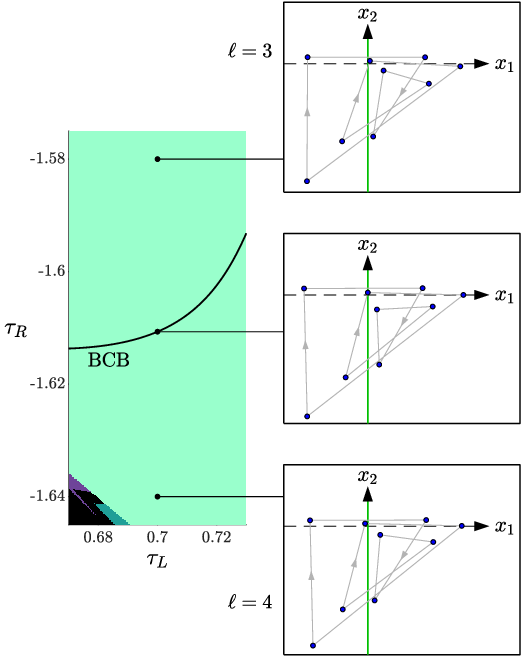}
\caption{
A two-parameter bifurcation diagram of the two-dimensional border-collision normal form
\eqref{eq:pwl} with \eqref{eq:exampleNonrotational} using $\delta_L = 0.1$ and $\delta_R = 1.2$.
This shows a curve of persistence-type BCBs (border-collision bifurcations)
where one point of a stable period-$9$ solution crosses the switching manifold, $x_1 = 0$.
(For parameter values in the lower-left the stable solution has higher period or is aperiodic.)
\label{fig:nopeTonguesSpecial10}
} 
\end{center}
\end{figure}

Heuristically the impossibility of Fig.~\ref{fig:nopePartitionedTongue} can explained with a simple argument.
The invariant circle in the Arnold tongue has stable and saddle periodic solutions
whose points alternate (stable, saddle, stable, etc) as we go around the circle.
Thus, assuming the circle intersects $x_1 = 0$ at exactly two points,
the stable and saddle solutions must have either the same number of points in $x_1 < 0$,
or their numbers of points in $x_1 < 0$ differ by one.
But they cannot have the same number of points in $x_1 < 0$
because they would have the same symbolic itinerary (i.e.~the same sequence of $L$'s and $R$'s)
and piecewise-linear maps only have one periodic solution with a given itinerary except at bifurcations.
Thus the numbers differ by one
and this must be maintained throughout the tongue.
So when one point of the stable solution crosses $x_1 = 0$,
one point of the unstable solution must also cross $x_1 = 0$,
and these two events cannot happen simultaneously in a codimension-one fashion.

The purpose of this paper we reach this conclusion in a rigorous manner and in a more general setting.
We show that when a piecewise-linear map \eqref{eq:pwl} is invertible and its fixed points do not suffer a degeneracy,
the restriction of the map to any attracting invariant circle is a degree-$1$ circle map (Theorem \ref{th:circleMapHasDegreeOne}).
We characterise the class of symbolic itineraries
that are possible for periodic solutions on degree-$1$ invariant circles
intersecting $x_1 = 0$ at exactly two points (Theorem \ref{th:perSolnIsRotational});
such periodic solutions can be associated to a rotation number and are termed {\em rotational}.
Finally we show that if a rotational periodic solution undergoes a BCB that does not alter its rotation number
then this bifurcation must be of nonsmooth-fold type whereby two solutions collide and annihilate (Theorem \ref{th:bcbIsNSFold}).
All three results are new; they are achieved by combining the ideas of Feigin \cite{Fe78}
for connecting stability to admissibility,
with the basic theory for periodic solutions of piecewise-linear maps,
and together with the fundamental principles of circle homeomorphisms.

It is worth stressing that the rotational symbolic itineraries
form a two-parameter family.
This reflects the two-dimensional sausage-string structure of the Arnold tongues
and is an extension of the one-parameter family of minimax itineraries
that are well known to characterise periodic solutions of degree-$1$ circle homeomorphisms in the classical setting \cite{GaLa84,GrAl17,HaZh98}.
In the classical setting the symbols correspond to different branches of the circle map;
in our setting the symbols correspond to different sides of $x_1 = 0$.

The remainder of this paper is organised as follows.
We start in \S\ref{sec:circleMaps} by brief reviewing circle homeomorphisms.
Next in \S\ref{sec:sym} we consider continuous maps on $\mathbb{R}^n$ with two smooth pieces
and formalise symbolic representations for their periodic solutions following \cite{Si17c}.
In this setting we state and prove Theorem \ref{th:perSolnIsRotational}
that periodic solutions on invariant circles have rotational symbolic itineraries.
In \S\ref{sec:perSolns} we return to the piecewise-linear form \eqref{eq:pwl}.
We show how periodic solutions can be computed explicitly and state and prove Theorem \ref{th:circleMapHasDegreeOne}.
Then in \S\ref{sec:bcbs} we consider BCBs of periodic solutions
and state and prove Theorem \ref{th:bcbIsNSFold}.
We then end in \S\ref{sec:conc} with a brief discussion.

\section{Circle maps}
\label{sec:circleMaps}

In this section we describe elementary concepts for maps on $\mathbb{S}^1$.
Further details can be found in the texts \cite{AlLl00,DeVa93,He79,KaHa95}.

We represent the circle $\mathbb{S}^1$ by the interval $[0,1)$, where $1$ is {\em identified} with $0$.
For any $w \in \mathbb{R}$ we let $\pi(w) = w - \lfloor w \rfloor$ be the fractional part of $w$.
So the function $\pi : \mathbb{R} \to \mathbb{S}^1$ is the natural projection of the real line onto the circle.

It is often helpful to extend circle maps onto $\mathbb{R}$ as follows.
Note that if $g : \mathbb{S}^1 \to \mathbb{S}^1$ is continuous then $g(1^-) = g(0)$,
where $g(1^-)$ is short-hand for $\lim_{t \to 1^-} g(t)$ (the limit from below).

\begin{definition}
A \dfem{lift} of a continuous map $g : \mathbb{S}^1 \to \mathbb{S}^1$
is a continuous function $G : \mathbb{R} \to \mathbb{R}$ with the property
that $\pi(G(w)) = g(\pi(w))$ for all $w \in \mathbb{R}$.
\label{df:lift}
\end{definition}

\begin{figure}[b!]
\begin{center}
\includegraphics[width=14cm]{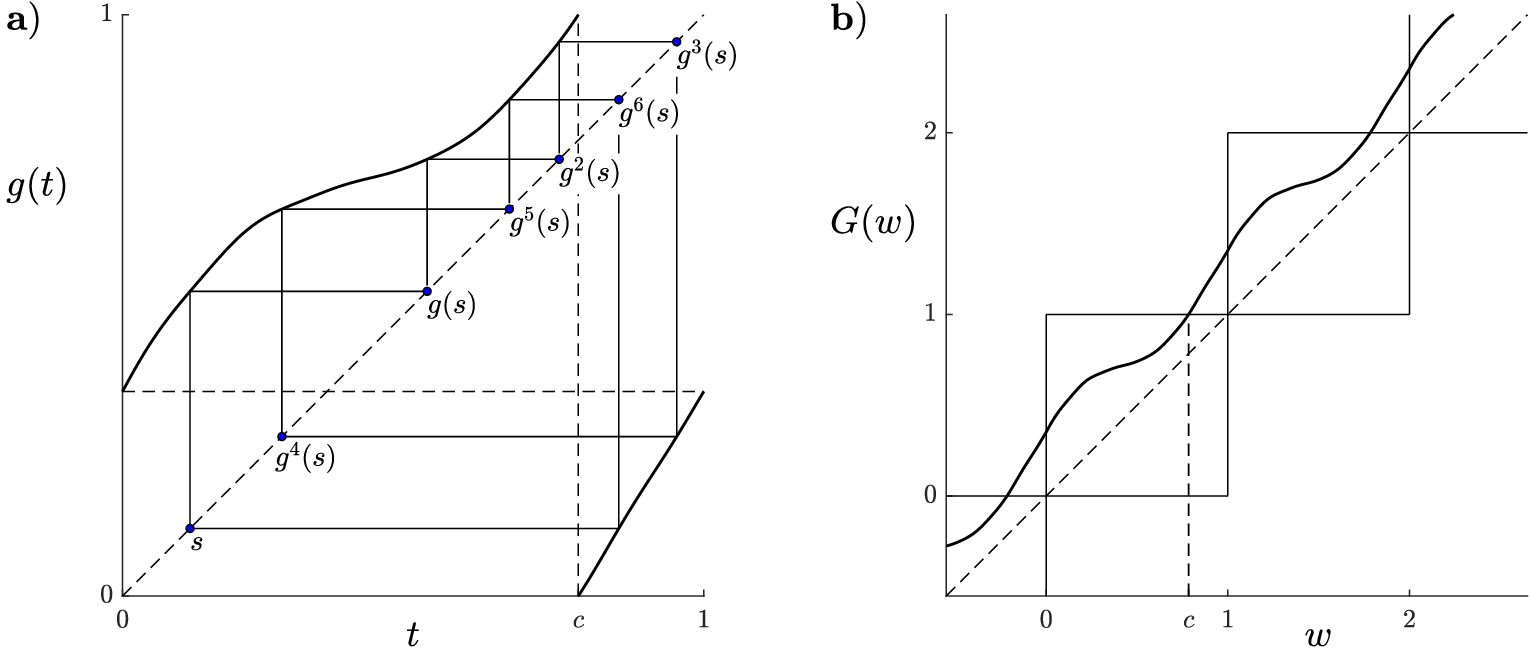}
\caption{
A continuous degree-$1$ circle map (panel (a)) and a corresponding lift (panel (b)).
The circle map has a stable periodic solution with rotation number $\rho = \frac{2}{7}$.
\label{fig:nopeMapAndLift}
} 
\end{center}
\end{figure}	

Fig.~\ref{fig:nopeMapAndLift} shows a circle map and a corresponding lift.
Lifts are unique up to translation by an integer
so the value of $G(1) - G(0)$ is independent of the lift.
This value is an integer because $g(1^-) = g(0)$ and called the degree of $g$:

\begin{definition}
The \dfem{degree} of a continuous map $g : \mathbb{S}^1 \to \mathbb{S}^1$
is the value of $G(1) - G(0)$, where $G$ is a lift of $g$.
\label{df:degree}
\end{definition}

If $g$ is a homeomorphism then its degree must be $1$ or $-1$.
If $g$ is a degree-$1$ homeomorphism
then for any lift $G$ there exists $r \in \mathbb{Z}$ such that
\begin{equation}
G(t) = \begin{cases}
g(t) + r, & t \in [0,c), \\
g(t) + r + 1, & t \in [c,1),
\end{cases}
\label{eq:G}
\end{equation}
where $c = g^{-1}(0)$ as in Fig.~\ref{fig:nopeMapAndLift}.

Next we define the rotation number.
This can be interpreted as the average amount by which iterates of $g$ step around the circle.
It is also the fraction of iterates that belong to $[c,1)$.

\begin{definition}
The \dfem{rotation number} of a degree-$1$ homeomorphism $g : \mathbb{S}^1 \to \mathbb{S}^1$ is
\begin{equation}
\rho = \lim_{i \to \infty} \pi \left( \frac{G^i(t) - t}{i} \right)
\label{eq:rotationNumber}
\end{equation}
for any lift $G$ of $g$ and $t \in [0,1)$.
\label{df:rotNum}
\end{definition}

As shown originally by Poincar\'e \cite{Po81},
the rotation number is well-defined because
the limit in \eqref{eq:rotationNumber} exists and is independent of $t$.

We conclude this section by characterising the way by which points in a periodic solution
of a degree-$1$ homeomorphism are ordered.
This uses the following definition.

\begin{definition}
Let $m$ and $p$ be positive coprime integers (i.e.~${\rm gcd}(m,p) = 1$).
The \dfem{multiplicative inverse} of $m$ modulo $p$ is the unique number $d \in \{ 1,2,\ldots,p-1 \}$
for which $m d ~{\rm mod}~ p = 1$.
\label{df:multInv}
\end{definition}

\begin{lemma}
Let $\xi$ be a period-$p$ solution of a degree-$1$ homeomorphism $g : \mathbb{S}^1 \to \mathbb{S}^1$.
Let $s \in [0,1)$ be the smallest point in $\xi$
and let $m$ be number of points in $\xi$ that belong to $[c,1)$.
Then $m$ and $p$ are coprime,
the rotation number of $g$ is $\frac{m}{p}$, and
\begin{equation}
s < g^d(s) < g^{2 d}(s) < \ldots < g^{(p-1)d}(s),
\label{eq:ordering}
\end{equation}
where $d$ is the multiplicative inverse of $m$ modulo $p$.
\label{le:ordering}
\end{lemma}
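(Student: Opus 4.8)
The plan is to pass to a lift and exploit that an orientation-preserving circle homeomorphism permutes any periodic orbit rigidly in cyclic order. Since $g$ is a degree-$1$ homeomorphism it is orientation-preserving, so the lift $G$ of \eqref{eq:G} is a strictly increasing homeomorphism of $\mathbb{R}$ satisfying $G(w+1)=G(w)+1$. Write the orbit $\xi$ in increasing order as $s=\tau_0<\tau_1<\cdots<\tau_{p-1}$ in $[0,1)$, and extend it to a bi-infinite increasing sequence by setting $\tau_{i+p}=\tau_i+1$. The central claim is that $G$ acts on this sequence as a single index shift: there is a fixed integer $a$ with $G(\tau_i)=\tau_{i+a}$ for all $i$. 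To see this, note that $G(\tau_0)$ is a lift of the orbit point $g(\tau_0)$, so $G(\tau_0)=\tau_a$ for some integer $a$; since $G$ is a strictly increasing bijection of the discrete set $\{\tau_i\}$ that commutes with the unit translation, it must send consecutive points to consecutive points, and propagating this forward and backward gives $G(\tau_i)=\tau_{i+a}$ throughout.

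Two facts follow immediately. Iterating yields $g^j(s)=\tau_{ja\bmod p}$, so the $g$-orbit of $s$ exhausts all $p$ points exactly when $\{ja\bmod p:0\le j<p\}=\{0,\dots,p-1\}$; as $\xi$ has minimal period $p$ this forces $\gcd(a,p)=1$. Moreover, telescoping along the orbit and using $G(w+n)=G(w)+n$ together with the explicit form \eqref{eq:G} gives
\[
G^p(\tau_0)-\tau_0=\sum_{i=0}^{p-1}\bigl(G(\tau_i)-\tau_i\bigr)=pr+m,
\]
because $g$ merely permutes the $\tau_i$ (so the terms $g(\tau_i)-\tau_i$ cancel) and the extra $+1$ in \eqref{eq:G} is picked up at exactly the $m$ points lying in $[c,1)$. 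On the other hand the rigid shift gives $G^p(\tau_0)=\tau_{pa}=\tau_0+a$, so $a=pr+m$. Hence $a\equiv m\pmod p$ and $\gcd(m,p)=\gcd(a,p)=1$, and evaluating the rotation number \eqref{eq:rotationNumber} along the periodic orbit yields $\rho=\pi(a/p)=\frac{m}{p}$.

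The ordering \eqref{eq:ordering} is then a short reindexing. Let $d$ be the multiplicative inverse of $m$ modulo $p$ (Definition \ref{df:multInv}); since $a\equiv m\pmod p$ we also have $da\equiv 1\pmod p$, so $jda\equiv j\pmod p$ and therefore $g^{jd}(s)=\tau_{jda\bmod p}=\tau_j$ for each $j\in\{0,\dots,p-1\}$. Substituting this into the increasing chain $\tau_0<\tau_1<\cdots<\tau_{p-1}$ produces precisely \eqref{eq:ordering}.

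I expect the main obstacle to be the rigid-shift property $G(\tau_i)=\tau_{i+a}$, which is where all of the circle-homeomorphism structure enters; once it is in hand, the coprimality, the value of the rotation number, and the final reindexing are all routine. Some care is also needed in the degenerate situations where $0$ or $c$ coincides with a point of $\xi$, so that the half-open conventions in \eqref{eq:G} and in the definition of $m$ remain consistent, but these cases do not affect the counting argument.
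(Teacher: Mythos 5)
Your proposal is correct and follows essentially the same route as the paper's proof in Appendix \ref{app:a}: lift to $\mathbb{R}$, extend the ordered orbit to a bi-infinite sequence commuting with unit translation, establish the rigid index shift $G(\tau_i)=\tau_{i+a}$ via monotonicity, match $G^p(s)=s+a$ against $G^p(s)=s+rp+m$ to get $a=rp+m$, and reindex with the multiplicative inverse $d$. The only cosmetic difference is that you justify the constancy of the shift by the consecutive-points argument while the paper counts points between $w^{(i)}$ and $w^{(j)}$; these are the same idea.
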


As an example consider the period $p=7$ solution in Fig.~\ref{fig:nopeMapAndLift}-a.
It has $m = 2$ points in $[c,1)$ and rotation number $\rho = \frac{2}{7}$.
Here $d = 4$ and indeed the points are ordered according to \eqref{eq:ordering}.
A proof of Lemma \ref{le:ordering} can be found in \cite{KaHa95};
for completeness a proof is provided in Appendix \ref{app:a}.

\section{Symbolic representations of periodic solutions}
\label{sec:sym}

Consider a map
\begin{equation}
f(x) = \begin{cases}
f_L(x), & x_1 \le 0, \\
f_R(x), & x_1 \ge 0,
\end{cases}
\label{eq:f}
\end{equation}
where $f_L$ and $f_R$ are each smooth and defined throughout $\mathbb{R}^n$.
The hyperplane $x_1 = 0$ is the {\em switching manifold} of $f$.
To describe periodic solutions of $f$ we use words comprised of $L$'s and $R$'s.
For example the upper phase portrait in Fig.~\ref{fig:nopeTonguesSpecial10}
shows an admissible stable $LLRRRRLRR$-cycle.
Formally we use the following definition.

\begin{definition}
Let $\Omega = \{ L, R \}$ and $p \ge 1$ be an integer.
An element of $\Omega^p$ is \dfem{word} $\cX$ of length $p$
that we write as $\cX = \cX_0 \cX_1 \cdots \cX_{p-1}$.
An \dfem{$\cX$-cycle} of a map \eqref{eq:f} is an ordered set $\{ y^{(0)}, y^{(1)}, \ldots, y^{(p-1)} \}$ of points in $\mathbb{R}^n$
for which
\begin{equation}
\begin{split}
f_{\cX_0}(y^{(0)}) &= y^{(1)}, \\
f_{\cX_1}(y^{(1)}) &= y^{(2)}, \\
&\hspace{2mm}\vdots \\
f_{\cX_{p-1}}(y^{(p-1)}) &= y^{(0)}.
\end{split}
\nonumber
\end{equation}
An $\cX$-cycle is \dfem{admissible}
if $y^{(i)}_1 < 0$ for all $i$ for which $\cX_i = L$,
and $y^{(i)}_1 > 0$ for all $i$ for which $\cX_i = R$.
An $\cX$-cycle is \dfem{virtual}
if either $y^{(i)}_1 > 0$ for some $i$ for which $\cX_i = L$,
or $y^{(i)}_1 < 0$ for some $i$ for which $\cX_i = R$.
\label{df:cycle}
\end{definition}

\begin{definition}
The \dfem{shift map} $\sigma : \Omega^p \to \Omega^p$ is defined by
\begin{equation}
\sigma(\cX) = \cX_1 \cX_2 \cdots \cX_{p-1} \cX_0 \,.
\label{eq:sigma}
\end{equation}
A word $\cY$ is said to be a \dfem{shift} of another word $\cZ$ if there exists an integer $i$ such that $\cY = \sigma^i(\cZ)$.
\label{df:shift}
\end{definition}

If a periodic solution of \eqref{eq:f} is an $\cX$-cycle,
then it is also a $\sigma^i(\cX)$-cycle for any integer $i$.
It follows that any periodic solution of \eqref{eq:f} with no points on the switching manifold
corresponds to a word that is unique up to a shift.

\begin{figure}[b!]
\begin{center}
\includegraphics[width=6cm]{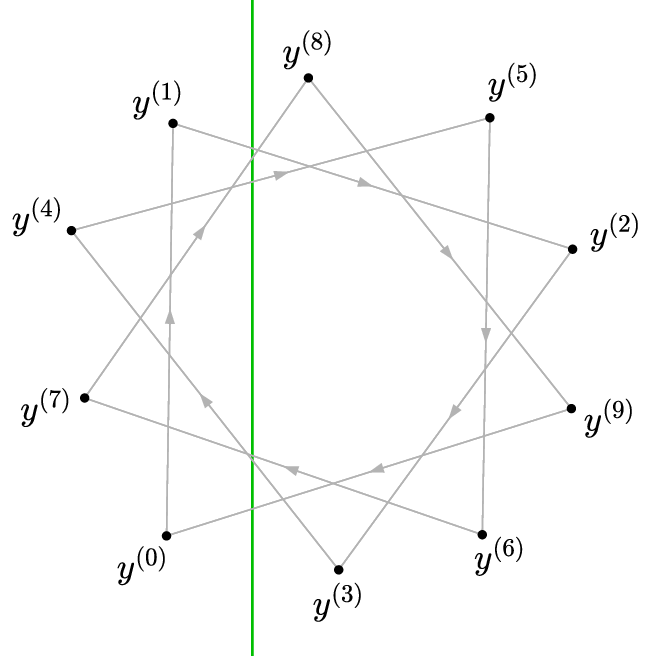}
\caption{
An $\cF[4,3,10]$-cycle.
The green line is the switching manifold $x_1 = 0$.
\label{fig:nopeRot}
} 
\end{center}
\end{figure}

To motivate the next definition, consider a period-$p$ solution of \eqref{eq:f}
whose points lie roughly on a circle, Fig.~\ref{fig:nopeRot}.
Suppose $\ell$ of these points lie to the left of the switching manifold
and let $y^{(0)}$ be the lower-most point located left of the switching manifold as indicated in the figure.
Let $y^{(i)} = f^i \left( y^{(0)} \right)$ for each $i = 1,2,\ldots,p-1$,
and suppose each point maps $m$ places clockwise.
Then each $y^{(i)}$ is located $i m ~{\rm mod}~ p$ places clockwise from $y^{(0)}$,
so $y^{(i)}$ lies to the left of the switching manifold if and only if $i m ~{\rm mod}~ p \in \{ 0,1,\ldots,\ell-1 \}$.

\begin{definition}
Let $\ell, m, p$ be positive integers with $\ell < p$, $m < p$, and $m$ and $p$ coprime.
Define the word $\cF[\ell,m,p] \in \Omega^p$ by
\begin{equation}
\cF_i = \begin{cases}
L, & i m ~{\rm mod}~ p < \ell, \\
R, & \text{otherwise}.
\end{cases}
\label{eq:cFi}
\end{equation}
A word $\cX$ is said to be \dfem{rotational} if it is a shift of some $\cF[\ell,m,p]$.
\label{df:cF}
\end{definition}

The following result provides an equivalent definition of $\cF[\ell,m,p]$.

\begin{lemma}
For any rotational word $\cF[\ell,m,p]$ and $j \in \{ 0,1,\ldots,p-1 \}$,
\begin{equation}
\cF_{j d \,{\rm mod}\, p} = \begin{cases}
L, & j < \ell, \\
R, & \text{otherwise},
\end{cases}
\label{eq:cFj}
\end{equation}
where $d$ is the multiplicative inverse of $m$ modulo $p$.
\label{le:cF1}
\end{lemma}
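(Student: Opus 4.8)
The plan is to verify \eqref{eq:cFj} by a single direct substitution into the defining relation \eqref{eq:cFi}, exploiting only the defining property $m d \bmod p = 1$ of the multiplicative inverse $d$ (Definition \ref{df:multInv}). The key observation is that the index $i = j d \bmod p$ is precisely the one whose image $i m \bmod p$ equals $j$.

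First I would fix $j \in \{0,1,\ldots,p-1\}$ and set $i = j d \bmod p$, which is a legitimate index since it lies in $\{0,1,\ldots,p-1\}$. Applying \eqref{eq:cFi} with this index, we have $\cF_i = L$ precisely when $i m \bmod p < \ell$, so it suffices to show that $i m \bmod p = j$. Since $i \equiv j d \pmod{p}$, we obtain $i m \equiv j d m \equiv j (d m) \equiv j \pmod{p}$ using $d m \equiv 1 \pmod{p}$, and because $j \in \{0,1,\ldots,p-1\}$ this congruence forces $i m \bmod p = j$. Substituting back, $\cF_{j d \bmod p} = L$ if and only if $j < \ell$, and equals $R$ otherwise, which is exactly \eqref{eq:cFj}.

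The computation is elementary, so there is no genuine obstacle; the only point requiring a moment's care is the reduction of a product of two ``mod $p$'' expressions, namely the claim that replacing $i = j d \bmod p$ by $j d$ leaves the residue of $i m$ modulo $p$ unchanged. I would handle this cleanly by reasoning with congruences throughout and only passing to a canonical representative in $\{0,1,\ldots,p-1\}$ at the very last step. Finally, it is worth remarking that since $\gcd(m,p) = 1$ forces $\gcd(d,p) = 1$, the map $j \mapsto j d \bmod p$ is a bijection of $\{0,1,\ldots,p-1\}$; this permutation property is the structural reason why \eqref{eq:cFj} may be read as an equivalent definition of $\cF[\ell,m,p]$, even though the pointwise identity established above is all that the statement of the lemma requires.
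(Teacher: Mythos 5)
Your proof is correct and follows essentially the same route as the paper's: substitute $i = j d \bmod p$ into \eqref{eq:cFi} and use $d m \equiv 1 \pmod p$ to reduce $i m \bmod p$ to $j$. The paper compresses this to one line; your version merely spells out the congruence manipulation more explicitly.
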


\begin{proof}
Notice $j < \ell$ if and only if $j d m ~{\rm mod}~ p < \ell$,
so by substituting $i = j d ~{\rm mod}~ p$ into \eqref{eq:cFi} we obtain \eqref{eq:cFj}.
\end{proof}

In the remainder of the paper we use the following notation.
Given $k \in \mathbb{Z}$ and a word $\cX \in \Omega^p$,
we write $\cX^{\overline{k}}$
for the word in $\Omega^p$ that is identical to $\cX$ except in its $i^{\rm th}$ symbol,
where $i = k ~{\rm mod}~ p$.
E.g.~if $\cX = LLLRR$ then $\cX^{\overline{2}} = LLRRR$.
The following lemmas will be useful in \S\ref{sec:bcbs}.

\begin{lemma}
Let $\cX = \cF[\ell,m,p]$ be rotational with $2 \le \ell \le p-2$
and $d$ be the multiplicative inverse of $m$ modulo $p$.
Given $j \in \{ 0,1,\ldots,p-1 \}$
the word $\cX^{\overline{j d}}$ is a shift of $\cF[\ell-1,m,p]$ or $\cF[\ell+1,m,p]$
if and only if $j \in \{ 0, \ell-1, \ell, p-1 \}$.
\label{le:cF2}
\end{lemma}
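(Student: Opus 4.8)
The plan is to pass to a reindexed description of the word in which $\cF[\ell,m,p]$ becomes a single consecutive block of $L$'s, and then track how flipping one symbol and taking shifts act in these coordinates. First, using Lemma \ref{le:cF1}, I would reindex the positions of $\cX = \cF[\ell,m,p]$ by the permutation $j \mapsto j d \bmod p$: the symbol of $\cX$ in position $j d \bmod p$ is $L$ precisely when $j < \ell$. Thus, listing the symbols in the order $j = 0, 1, \ldots, p-1$, the word $\cX$ reads as $\ell$ consecutive $L$'s followed by $p - \ell$ consecutive $R$'s. Forming $\cX^{\overline{j d}}$ then corresponds, in these coordinates, to flipping the symbol in slot $j$.

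Second, I would establish the key combinatorial fact that, relative to the same reindexing, the shift map $\sigma$ acts as a cyclic translation of the slots: if $\cY = \sigma(\cX)$ then the reindexed symbol of $\cY$ in slot $j$ equals the reindexed symbol of $\cX$ in slot $j + m \bmod p$, which follows from $d m \equiv 1 \pmod{p}$. Since $\gcd(m,p) = 1$, iterating this generates every cyclic translation, so a word is a shift of $\cF[\ell',m,p]$ (same $m$ and $p$) if and only if, in the reindexed coordinates, its $L$'s occupy a consecutive cyclic block of length $\ell'$.

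Third, with this characterization the result reduces to a short block-counting argument in the reindexed coordinates, where $\cX$ carries $L$'s on the block $\{0, \ldots, \ell-1\}$. If $j < \ell$ we flip an $L$ to an $R$, leaving the $\ell-1$ remaining $L$'s on $\{0, \ldots, \ell-1\} \setminus \{j\}$; this is a consecutive cyclic block, so that $\cX^{\overline{j d}}$ is a shift of $\cF[\ell-1,m,p]$, exactly when $j$ is an endpoint of the block, i.e.\ $j \in \{0, \ell-1\}$. If $j \ge \ell$ we flip an $R$ to an $L$, giving $L$'s on $\{0, \ldots, \ell-1\} \cup \{j\}$; this is a consecutive cyclic block, so that $\cX^{\overline{j d}}$ is a shift of $\cF[\ell+1,m,p]$, exactly when $j$ extends the block at either end, i.e.\ $j \in \{\ell, p-1\}$, the value $p-1$ arising from the cyclic adjacency of slots $p-1$ and $0$. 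Combining the two cases yields precisely the set $\{0, \ell-1, \ell, p-1\}$, and the hypotheses $2 \le \ell \le p-2$ ensure these four indices are distinct and that both the $L$-block and the $R$-block have length at least two, so the endpoint and wraparound analysis is valid.

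I expect the main obstacle to be the second step, namely proving cleanly that shifts become cyclic translations under the reindexing, since this requires careful bookkeeping with the modular arithmetic relating $d$ and $m$; once this translation principle is in hand, the first and third steps are routine.
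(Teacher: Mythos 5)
Your proposal is correct and follows essentially the same route as the paper: both reindex positions via $j \mapsto jd \bmod p$ using Lemma \ref{le:cF1} so that $\cF[\ell,m,p]$ becomes a consecutive block of $L$'s, observe that shifts act as cyclic translations in these coordinates (the paper leaves this implicit in its $L$--$R$--$L$ pattern argument and in the identity $\sigma^d(\cY) = \cF[\ell-1,m,p]$ for $j=0$), and then reduce to checking which single-slot flips preserve the consecutive-block property. Your write-up is somewhat more systematic than the paper's case check, but the underlying idea is identical.
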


\begin{proof}
Throughout this proof we use \eqref{eq:cFj}.
Let $\cY = \cX^{\overline{j d}}$.
Suppose $j \in \{ 0,1,\ldots,\ell-1 \}$, equivalently $\cX_{j d \,{\rm mod}\, p} = L$
(the case $\cX_{j d \,{\rm mod}\, p} = R$ can be treated similarly).
If $j = \ell-1$ then $\cY = \cF[\ell-1,m,p]$,
and if $j = 0$ then $\sigma^d(\cY) = \cF[\ell-1,m,p]$.
Otherwise $\ell \ge 3$ and $\cY$ has the property
that $\cY_{(j-1) d \,{\rm mod}\, p} = L$, $\cY_{j d \,{\rm mod}\, p} = R$, and $\cY_{(j+1) d \,{\rm mod}\, p} = L$,
so cannot be a shift $\cF[\ell-1,m,p]$.
\end{proof}

\begin{lemma}
For any $\cX = \cF[\ell,m,p]$,
\begin{equation}
\sigma^{\ell d} \big( \cX^{\overline{0}} \big) = \sigma^{(\ell-1)d}(\cX)^{\overline{0}}.
\label{eq:id2}
\end{equation}
\label{le:cF3}
\end{lemma}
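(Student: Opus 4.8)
The plan is to verify the identity one symbol at a time, comparing the $i$-th symbol of each side for every $i \in \{0,1,\ldots,p-1\}$. The two basic tools are the explicit characterisation $\cF_i = L \iff im \bmod p < \ell$ from Definition~\ref{df:cF}, together with the defining congruence $dm \equiv 1 \pmod p$, which lets me reduce any expression of the form $(i + kd)m \bmod p$ to $(im + k) \bmod p$. Setting $t = im \bmod p$ throughout keeps the bookkeeping manageable.

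First I would strip each side down to a single shift of $\cX$ in which exactly one symbol has been toggled. Writing out the left side, $\sigma^{\ell d}(\cX^{\overline{0}})$ has $i$-th symbol $\cX_{(i + \ell d) \bmod p}$ except at the one index $i^\ast := (-\ell d) \bmod p$, where the shift exposes the flipped symbol at position $0$; writing out the right side, $\sigma^{(\ell-1)d}(\cX)^{\overline{0}}$ has $i$-th symbol $\cX_{(i + (\ell-1)d) \bmod p}$ except at $i = 0$, where it is flipped. (Conceptually this is the commutation rule $\sigma^k(\cX^{\overline{j}}) = \sigma^k(\cX)^{\overline{j-k}}$, but I only need these two instances.)

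Next I would compare the two \emph{unflipped} shifted words $\sigma^{\ell d}(\cX)$ and $\sigma^{(\ell-1)d}(\cX)$. Using the reduction above, $\sigma^{\ell d}(\cX)$ has $L$ in position $i$ iff $(t+\ell) \bmod p < \ell$, i.e.~iff $t \in \{p-\ell,\ldots,p-1\}$, while $\sigma^{(\ell-1)d}(\cX)$ has $L$ iff $(t + \ell - 1) \bmod p < \ell$, i.e.~iff $t \in \{0\} \cup \{p-\ell+1,\ldots,p-1\}$. These two index sets have symmetric difference exactly $\{0,\,p-\ell\}$, so the two shifted words disagree in precisely the two positions where $t = 0$ and $t = p-\ell$, equivalently $i = 0$ and $i = i^\ast$.

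The crux is then to check that the two single toggles land exactly on these two discrepant positions and resolve them: the right-hand flip sits at $i=0$ (where generic left reads $R$, generic right reads $L$, so the flip makes both $R$), and the left-hand flip sits at $i = i^\ast$ (where generic left reads $L$, generic right reads $R$, so the flip again makes both $R$), while at every other position the two shifted words already agree and neither side is flipped. The only genuine obstacle is the modular bookkeeping: I must confirm $i^\ast \neq 0$ — which follows from $0 < \ell < p$ together with $\gcd(d,p)=1$ — so that the two special positions are distinct and each flip acts on its own discrepancy rather than interfering, and I must handle the wrap-around carefully when computing the symmetric difference. Once these points are pinned down the two sides agree symbol-by-symbol and the lemma follows.
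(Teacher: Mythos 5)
Your proof is correct and follows essentially the same route as the paper: both arguments use $dm \equiv 1 \pmod p$ to characterise the set of positions carrying $L$ in each shifted word (the paper's index $j$ in Lemma~\ref{le:cF1} is exactly your $t = im \bmod p$) and conclude that the two sides carry $L$ on the same set, namely $t \in \{p-\ell+1,\ldots,p-1\}$. The only difference is bookkeeping — you compare the unflipped shifts first and then check that the two single toggles land on the symmetric difference $\{0, p-\ell\}$, whereas the paper folds the flips directly into the characterisation.
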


\begin{proof}
By \eqref{eq:cFj} $\cX_{j d \,{\rm mod}\, p} = L$ if and only if $j \in \{ 0, 1, \ldots, \ell-1 \}$.
Then $\cX^{\overline{0}}_{j d \,{\rm mod}\, p} = L$ if and only if $j \in \{ 1, 2, \ldots, \ell-1 \}$.
So the $j d^{\rm th}$ symbol of the left-hand side of \eqref{eq:id2} is $L$ 
if and only if $j + \ell \in \{ 1, 2, \ldots, \ell-1 \}$,
equivalently $j \in \{ p-\ell+1, p-\ell+2, \ldots, p-1 \}$.

The $j d^{\rm th}$ symbol of $\sigma^{(\ell-1)d}(\cX)$ is $L$
if and only if $j + \ell - 1 \in \{ 0, 1, \ldots, \ell-1 \}$,
equivalently $j \in \{ p-\ell+1, p-\ell+2, \ldots, p-1 \} \cup \{ 0 \}$.
Flipping the $0^{\rm th}$ symbol gives the right-hand side of \eqref{eq:id2},
and the above characterisation of the left-hand side of \eqref{eq:id2}.
\end{proof}

Finally in this section we consider periodic solutions on invariant circles of \eqref{eq:f} when this map is a homeomorphism.
To be clear, an {\em invariant circle} of $f$ is a simple closed curve $C \subset \mathbb{R}^n$ for which $f(C) = C$.
The restriction of $f$ to $C$ (denoted $f|_C$) is a circle map.

Suppose $C$ intersects the switching manifold at exactly two points and $f|_C$ is degree-$1$.
For any rotational word $\cX$ one can certainly engineer a piecewise-smooth map $f$ with such a circle
and on which the map has an admissible $\cX$-cycle.
The following result provides a converse
and shows that rotational words are {\em exactly} those
that correspond to periodic solutions on invariant circles with the given assumptions.
Given what we know about degree-$1$ circle homeomorphisms,
this result should not be surprising because by design $\cF[\ell,m,p]$
encodes rotation with rotation number $\frac{m}{p}$.

\begin{theorem}
Let $f$ be a homeomorphism of the form \eqref{eq:f}.
Suppose $f$ has an invariant circle $C$ intersecting the switching manifold $x_1 = 0$ at exactly two points
and $f|_C$ is degree-$1$.
Then for any admissible $\cX$-cycle on $C$ the word $\cX$ is rotational.
\label{th:perSolnIsRotational}
\end{theorem}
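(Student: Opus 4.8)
The plan is to transfer the dynamics on $C$ to a genuine circle homeomorphism and then match the symbolic itinerary against the combinatorics already packaged in Lemma \ref{le:ordering}. First I would fix an orientation-preserving homeomorphism $h : \mathbb{S}^1 \to C$ and set $g = h^{-1} \circ (f|_C) \circ h$. Since degree is a conjugacy invariant, $g$ is a degree-$1$ circle homeomorphism. The points of the admissible $\cX$-cycle form an orbit of $f|_C$, so their $h$-preimages give a period-$p$ orbit $\{ t^{(i)} \}$ of $g$ with $t^{(i)} = g^i(t^{(0)})$, and $\cX_i = L$ precisely when $t^{(i)}$ lies in the preimage $C_L \subset \mathbb{S}^1$ of the part of $C$ with $x_1 < 0$.

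Next I would apply Lemma \ref{le:ordering}, which supplies a rotation number $\frac{m}{p}$ with $\gcd(m,p) = 1$ and, writing $z_0 < z_1 < \cdots < z_{p-1}$ for the orbit points in increasing order, the identity $z_k = g^{kd}(s)$ where $d$ is the multiplicative inverse of $m$ modulo $p$. From this, applying $g$ advances each point by $m$ ranks, i.e.\ $g(z_k) = z_{(k+m)\bmod p}$, since $g(z_k) = g^{kd+1}(s)$ and $dm \equiv 1 \pmod p$. Consequently, if $s_i$ denotes the rank of $t^{(i)}$ among the sorted orbit, then $s_{i+1} = (s_i + m)\bmod p$ and hence $s_i = (s_0 + i m)\bmod p$.

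The geometric input enters through the hypothesis that $C$ meets $x_1 = 0$ at exactly two points. These two points split $C$ into two arcs lying on opposite sides of the switching manifold, so $C_L$ is a single connected arc of $\mathbb{S}^1$. The orbit points contained in one arc occupy a cyclically contiguous block of ranks, say $\{ a, a+1, \ldots, a+\ell-1 \} \bmod p$, where $\ell$ is the number of $L$'s in $\cX$; here I would assume the cycle genuinely visits both sides, so that $\ell \in \{ 1, \ldots, p-1 \}$ (the degenerate all-$L$ or all-$R$ cases, where both symbols do not appear, being excluded). Combining this with the rank formula, $\cX_i = L$ if and only if $(s_0 + i m - a)\bmod p < \ell$.

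Finally I would read off the shift. By Definition \ref{df:cF}, $\cF[\ell,m,p]_{i'} = L$ exactly when $i' m \bmod p < \ell$, so choosing $i' = i + (s_0 - a)d$ gives $i' m \equiv s_0 + im - a \pmod p$ and therefore $\cX_i = \cF[\ell,m,p]_{(i + (s_0 - a)d)\bmod p}$; that is, $\cX = \sigma^{(s_0 - a)d}(\cF[\ell,m,p])$ is a shift of a rotational word, as required. I expect the main obstacle to be precisely this last bookkeeping step: one must keep the purely dynamical ordering from Lemma \ref{le:ordering} (ranks advancing by $m$) cleanly separated from the purely geometric partition forced by the two-point intersection (a contiguous $L$-block of length $\ell$), and then verify that their two offsets fuse into a single shift $(s_0 - a)d$ with the coprimality and range conditions of Definition \ref{df:cF} all met.
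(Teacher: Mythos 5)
Your proposal is correct and follows essentially the same route as the paper: conjugate $f|_C$ to a degree-$1$ circle homeomorphism, invoke Lemma \ref{le:ordering} to get the rank-advances-by-$m$ structure, and use the two-point intersection with $x_1=0$ to see that the $L$-points occupy a contiguous block of ranks, then match against Definition \ref{df:cF}. The only difference is cosmetic: the paper normalises the conjugacy (sending the $x_1<0$ arc to $(0,\tfrac12)$) and starts the itinerary at the smallest orbit point so that the shift offset vanishes and $\cX=\cF[\ell,m,p]$ on the nose, whereas you carry the offsets $s_0$ and $a$ explicitly and land on a shift of $\cF[\ell,m,p]$.
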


Fig.~\ref{fig:nopeInvCircleStable} provides an example.
Panel (a) is a phase portrait of the two-dimensional map \eqref{eq:exampleMapPWS}
with $(\alpha,\beta) = (-0.444,-0.6)$ that has an invariant circle $C$,
and panel (b) shows the restriction of the map to $C$.
Values $t$ in the domain of the circle map
correspond to points $x$ with $x_1 < 0$ if $t \in \left( 0, \frac{1}{2} \right)$ (orange),
and to points $x$ with $x_1 > 0$ if $y \in \left( \frac{1}{2}, 1 \right)$ (purple).
In the context of the circle map,
periodic points are assigned the symbol $L$ if they belong to $\left( 0, \frac{1}{2} \right)$
and assigned the symbol $R$ if they belong to $\left( \frac{1}{2}, 1 \right)$
This generalises the classical (smooth) situation
where symbols are instead used to distinguish the two branches of the circle map,
and we recover this scenario in the special case $c = \frac{1}{2}$.
In this case periodic solutions correspond to rotational words $\cF[\ell,m,p]$ with $\ell = p-m$.
The combinatorical properties of such words have been well studied, for instance they are {\em minimax} \cite{GaLa84}.

\begin{figure}[h!]
\begin{center}
\includegraphics[width=13cm]{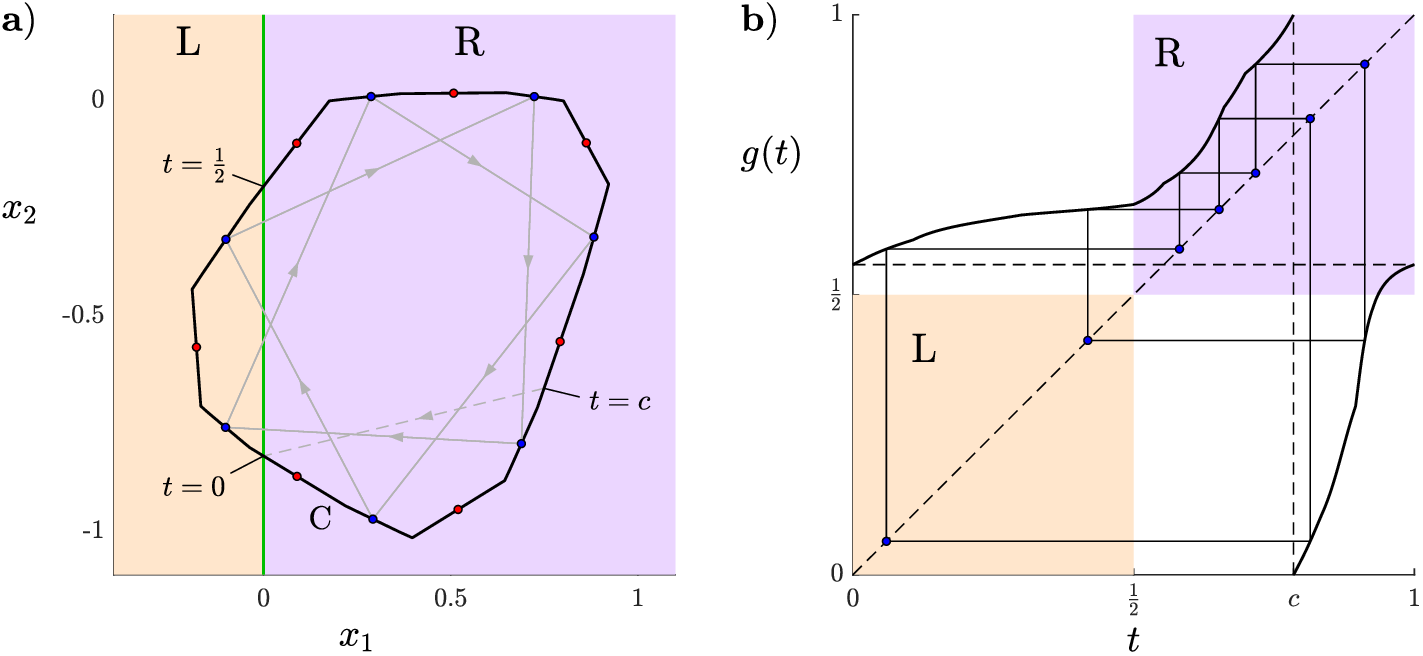}
\caption{
Panel (a) repeats Fig.~\ref{fig:nopePWLTongues}-a
showing a stable $\cF[2,2,7]$-cycle and an unstable $\cF[1,2,7]$-cycle on an invariant circle $C$.
Panel (b) shows the restriction of the map to $C$ and indicates the corresponding stable period-$7$ solution.
This map was constructed using $t = \varphi(x)$ where $\varphi : C \to [0,1)$
is defined by $t = \frac{\theta}{2 \pi}$ where $\theta$ is the angle
between the line segment from $x$ to $(0,-0.5)$ and the corresponding lower half of the switching manifold.
Points on $C$ that correspond to $t = 0$, $\frac{1}{2}$, and $c$ are indicated.
\label{fig:nopeInvCircleStable}
} 
\end{center}
\end{figure}

\begin{proof}[Proof of Theorem \ref{th:perSolnIsRotational}]
Let $\varphi : C \to [0,1)$ be a homeomorphism
with the $x_1 < 0$ part of $C$ mapping to $(0,\frac{1}{2})$
and the $x_1 > 0$ part of $C$ mapping to $(\frac{1}{2},1)$.
Then $f|_C$ is conjugate to the circle map $g = \varphi \circ f \circ \varphi^{-1}$.
Let $c = g^{-1}(0)$.

The image under $\varphi$ of the $\cX$-cycle is a period-$p$ solution of $g$.
Since $g$ is degree-$1$ this solution satisfies \eqref{eq:ordering}
where $s$ is the smallest point in the solution and $m$ is the number of points of the solution belonging to $[c,1)$.
Let $\ell$ be the number of points of the solution in $(0,\frac{1}{2})$.
Then $g^{j d}(s) \in (0,\frac{1}{2})$ if and only if $j \in \{ 0, 1, \ldots, \ell-1 \}$.

Write the $\cX$-cycle as $\{ y^{(0)}, y^{(1)}, \ldots, y^{(p-1)} \}$
where $y^{(i)} = \varphi^{-1} \left( g^i(s) \right)$ for each $i$.
By definition $\cX_i = L$ if and only if $y^{(i)}_1 < 0$.
Here $y^{(\ell d)}_1 < 0$ if and only if $g^{\ell d}(s) \in \{ 0, 1, \ldots, \ell-1 \}$,
so by \eqref{eq:cFj} we have $\cX = \cF[\ell,m,p]$.
\end{proof}

\section{Periodic solutions of piecewise-linear maps}
\label{sec:perSolns}

We continue to consider maps of the form \eqref{eq:f} but
now suppose $f_L$ is a linear function, i.e.
\begin{equation}
f_L(x) = A_L x + b,
\nonumber
\end{equation}
where $A_L \in \mathbb{R}^{n \times n}$ and $b \in \mathbb{R}^n$.
If $f_R$ is also linear then for $f$ to be continuous map we must have
\begin{equation}
f_R(x) = A_R x + b,
\nonumber
\end{equation}
where $A_R \in \mathbb{R}^{n \times n}$ differs from $A_L$ in only its first column.
This reproduces the piecewise-linear form \eqref{eq:pwl}.
Notice $f$ is a homeomorphism if and only if $\det(A_L) \det(A_R) > 0$.

Given a word $\cX \in \Omega^p$, let
\begin{equation}
f_\cX = f_{\cX_{p-1}} \circ \cdots \circ f_{\cX_1} \circ f_{\cX_0} \,,
\nonumber
\end{equation}
denote the composition of $f_L$ and $f_R$ in the order specified by $\cX$.
Since $f_L$ are $f_R$ are linear functions, so is $f_\cX$, specifically
\begin{equation}
f_\cX(x) = M_\cX x + P_\cX b,
\nonumber
\end{equation}
where
\begin{align}
M_\cX &= A_{\cX_{p-1}} \cdots A_{\cX_1} A_{\cX_0} \,, \label{eq:McW} \\
P_\cX &= A_{\cX_{p-1}} \cdots A_{\cX_2} A_{\cX_1} + \cdots + A_{\cX_{p-1}} A_{\cX_{p-2}} + A_{\cX_{p-1}} + I. \label{eq:PcW}
\end{align}
If $\det(I - M_{\cX}) \ne 0$ then $f_\cX$ has the unique fixed point
\begin{equation}
x^{\cX} = (I - M_{\cX})^{-1} P_{\cX} b.
\label{eq:xW}
\end{equation}
This point is one point of an $\cX$-cycle.
The remaining points are generated by iterating $x^{\cX}$
under $f_L$ and $f_R$ in the order specified by $\cX$.
In fact, since $f_\cX$ is a linear function,
a unique $\cX$-cycle exists if and only if $\det(I - M_{\cX}) \ne 0$.
But of course the $\cX$-cycle is not necessarily admissible.
This is governed by the signs of the first components of its points (Definition \ref{df:cycle}).

The Jacobian matrix of $f_\cX$ evaluated at $x^{\cX}$ (or evaluated at any point in $\mathbb{R}^n$) is $M_\cX$.
Thus the stability multipliers associated with an admissible $\cX$-cycle are the eigenvalues of $M_\cX$.
It follows that an admissible $\cX$-cycle of the piecewise-linear map $f$ is asymptotically stable if and only if
all eigenvalues of $M_\cX$ have modulus less than $1$.

Next we derive a concise formula for the first component of $x^\cX$.
This formula involves adjugate matrices:
if $A \in \mathbb{R}^{n \times n}$ is invertible the adjugate of $A$, denoted ${\rm adj}(A)$, satisfies
\begin{equation}
A^{-1} = \frac{{\rm adj}(A)}{\det(A)}.
\label{eq:Ainverse}
\end{equation}
More generally the adjugate is defined as follows \cite{Ko96,PiOd07}.

\begin{definition}
For all $i,j \in \{ 1,2,\ldots,n \}$
let $m_{ij}$ be the determinant of the $(n-1) \times (n-1)$ matrix
obtained by removing the $i^{\rm th}$ row and $j^{\rm th}$ column from $A$.
The \dfem{adjugate} of $A$ has $(i,j)$-entry $(-1)^{i+j} m_{ji}$ for all $i,j \in \{ 1,2,\ldots,n \}$.
\label{df:adjugate}
\end{definition}

Now consider the matrices $I - A_L$ and $I - A_R$.
These matrices differ only in their first columns,
thus by Definition \ref{df:adjugate} their adjugates have the same first row.
We denote this row $\varrho^{\sf T}$.

The significance of $\varrho^{\sf T}$ can be seen from the fixed points of $f$.
If $\det(I - A_L) \ne 0$ then $f_L$ has the unique fixed point $x^L = (I - A_L)^{-1} b$.
So by \eqref{eq:Ainverse} the first component of $x^L$ is
\begin{equation}
x^L_1 = \frac{\varrho^{\sf T} b}{\det(I - A_L)}.
\label{eq:xL1}
\end{equation}
Similarly if $\det(I - A_R) \ne 0$ the first component of the fixed point $x^R = (I - A_R)^{-1} b$ is
\begin{equation}
x^R_1 = \frac{\varrho^{\sf T} b}{\det(I - A_R)}.
\label{eq:xR1}
\end{equation}
The following result was first obtained in \cite{SiMe10} and will be needed in \S\ref{sec:bcbs}.
A proof is provided below for completeness.

\begin{proposition}
Let $\cX \in \Omega^p$ and suppose $\det(I - M_\cX) \ne 0$.
Then
\begin{equation}
x^{\cX}_1 = \frac{\det(P_\cX) \varrho^{\sf T} b}{\det(I - M_\cX)}.
\label{eq:xW1}
\end{equation}
\label{pr:xW1}
\end{proposition}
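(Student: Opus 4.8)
The plan is to evaluate the fixed point \eqref{eq:xW} componentwise via the adjugate. Writing $(I-M_\cX)^{-1} = {\rm adj}(I-M_\cX)/\det(I-M_\cX)$ as in \eqref{eq:Ainverse}, we have $x^\cX_1 = e_1^{\sf T}(I - M_\cX)^{-1}P_\cX b = \frac{e_1^{\sf T}\,{\rm adj}(I-M_\cX)\,P_\cX b}{\det(I-M_\cX)}$, so the denominator of \eqref{eq:xW1} appears immediately and the whole problem reduces to the $b$-independent matrix identity
\begin{equation}
e_1^{\sf T}\,{\rm adj}(I - M_\cX)\,P_\cX = \det(P_\cX)\,\varrho^{\sf T}. \nonumber
\end{equation}
Once this is established, right-multiplying by $b$ and dividing by $\det(I-M_\cX)$ yields \eqref{eq:xW1}.

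Two properties of $\varrho^{\sf T}$ drive the argument. First, as already observed before \eqref{eq:xL1}, $\varrho^{\sf T}$ is simultaneously the first row of ${\rm adj}(I-A_L)$ and of ${\rm adj}(I-A_R)$, because deleting the first column of $I-A_L$ and of $I-A_R$ produces the same matrix. Second, reading off the first row of the cofactor identity ${\rm adj}(I-A_s)(I-A_s) = \det(I-A_s)\,I$ gives, for each symbol $s \in \{L,R\}$,
\begin{equation}
\varrho^{\sf T}(I - A_s) = \det(I-A_s)\,e_1^{\sf T}, \qquad\text{equivalently}\qquad \varrho^{\sf T} A_s = \varrho^{\sf T} - \det(I-A_s)\,e_1^{\sf T}. \nonumber
\end{equation}
The second form is a \emph{peeling} rule: left-multiplying a product of $A_L$'s and $A_R$'s by $\varrho^{\sf T}$ strips off one factor at a time, each step returning $\varrho^{\sf T}$ minus a scalar multiple of $e_1^{\sf T}$. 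I would prove the displayed matrix identity by induction on the length $p$ of $\cX$. The base case $p=1$ has $P_\cX = I$ and is exactly the definition of $\varrho^{\sf T}$, consistent with \eqref{eq:xL1}--\eqref{eq:xR1}. For the inductive step I would peel the outermost factor using $M_\cX = A_{\cX_{p-1}}M_\cZ$ and $P_\cX = A_{\cX_{p-1}}P_\cZ + I$, where $\cZ = \cX_0 \cdots \cX_{p-2}$, combined with the peeling rule and the fact that $A_R - A_L$ has rank one and is supported entirely in the first column.

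The main obstacle is that neither ${\rm adj}$ nor $\det$ of $I-M_\cX$ factors through the product $A_{\cX_{p-1}}M_\cZ$, so the induction cannot merely quote the hypothesis for $\cZ$; the scalar $\det(P_\cX)$ must be assembled explicitly as the matrices are peeled off. The structural input that makes this tractable is that $A_L$ and $A_R$ agree in all columns except the first: this is precisely what keeps the relevant vectors proportional to $e_1^{\sf T}$ at every step, so that $\varrho^{\sf T}$ is common to both branches and the peeling rule produces only $e_1^{\sf T}$ terms, and it is why the \emph{first} component of $x^\cX$ admits the clean form \eqref{eq:xW1}. An alternative that sidesteps the induction is to present the $\cX$-cycle as the solution of the $np \times np$ block-cyclic linear system $y^{(i+1)} - A_{\cX_i}y^{(i)} = b$, whose coefficient determinant is $\pm\det(I-M_\cX)$; applying Cramer's rule to the single scalar $x^\cX_1$ and reducing the resulting block determinant by block row elimination should produce the factor $\det(P_\cX)$ from the $P$-structure and $\varrho^{\sf T}b$ from the first-column cofactors in one pass, again hinging on the first-column-only difference between $A_L$ and $A_R$.
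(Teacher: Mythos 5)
Your reduction is the right one: by \eqref{eq:xW} and \eqref{eq:Ainverse} the proposition is equivalent to the $b$-independent identity $e_1^{\sf T}\,{\rm adj}(I - M_\cX)\,P_\cX = \det(P_\cX)\,\varrho^{\sf T}$, and both properties of $\varrho^{\sf T}$ you record are correct. But neither of your two routes to this identity is actually carried out, and the one you develop furthest does not close. The peeling rule $\varrho^{\sf T} A_s = \varrho^{\sf T} - \det(I-A_s)\,e_1^{\sf T}$ controls $\varrho^{\sf T}$ acting on the left of \emph{products} of $A_L$ and $A_R$, whereas the quantity to be computed is $e_1^{\sf T}$ acting on the adjugate of $I$ \emph{minus} a product; you name the obstacle yourself --- ${\rm adj}(I - A_{\cX_{p-1}} M_\cZ)$ bears no useful multiplicative relation to ${\rm adj}(I - M_\cZ)$, so the inductive hypothesis cannot be invoked --- and then leave it unresolved. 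That unresolved step is the entire content of the proposition. The Cramer's-rule alternative is likewise only asserted (``should produce the factor $\det(P_\cX)$''), so what you have is a plan that stops exactly where the work begins.

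The missing idea is to multiply $P_\cX$ on the \emph{right} by $I - A_{\cX_0}$ rather than to peel from the left. Expanding via \eqref{eq:McW}--\eqref{eq:PcW} gives
\begin{equation}
P_\cX \left( I - A_{\cX_0} \right) = I - M_\cX + \sum_{j=1}^{p-1} A_{\cX_{p-1}} \cdots A_{\cX_{j+1}} \left( A_{\cX_j} - A_{\cX_0} \right),
\nonumber
\end{equation}
and since each $A_{\cX_j} - A_{\cX_0}$ is supported in the first column, $P_\cX(I - A_{\cX_0})$ and $I - M_\cX$ differ only in their first columns. The first row of an adjugate depends only on the remaining $n-1$ columns, so $e_1^{\sf T}\,{\rm adj}(I - M_\cX) = e_1^{\sf T}\,{\rm adj}\left( P_\cX(I - A_{\cX_0}) \right) = e_1^{\sf T}\,{\rm adj}(I - A_{\cX_0})\,{\rm adj}(P_\cX) = \varrho^{\sf T}\,{\rm adj}(P_\cX)$, and right-multiplying by $P_\cX$ yields $\det(P_\cX)\,\varrho^{\sf T}$ in one step, with no induction and no invertibility assumptions on $I - A_{\cX_0}$ or $P_\cX$ (the identities ${\rm adj}(AB) = {\rm adj}(B)\,{\rm adj}(A)$ and ${\rm adj}(P)P = \det(P)I$ are polynomial, hence unconditional). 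This is the paper's argument. Your instinct that the first-column-only difference between $A_L$ and $A_R$ is the structural driver is exactly right, but it has to enter through the first row of the adjugate, not through the peeling rule.
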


\begin{proof}
By \eqref{eq:McW} and \eqref{eq:PcW},
\begin{equation}
P_\cX \left( I - A_{\cX_0} \right)
= I - M_\cX + \sum_{j=1}^{p-1} A_{\cX_{p-1}} \cdots A_{\cX_{j+1}} \left( A_{\cX_j} - A_{\cX_0} \right).
\nonumber
\end{equation}
For each $j$ the matrices $A_{\cX_j}$ and $A_{\cX_0}$ are either equal or differ only in their first columns.
Thus $I - M_\cX$ and $P_\cX \left( I - A_{\cX_0} \right)$ differ only in their first columns.
Thus the first rows of the adjugates of $I - M_\cX$ and $P_\cX \left( I - A_{\cX_0} \right)$ are the same, i.e.
\begin{equation}
e_1^{\sf T} {\rm adj}(I - M_\cX) = e_1^{\sf T} {\rm adj} \left( P_\cX \left( I - A_{\cX_0} \right) \right),
\nonumber
\end{equation}
where $e_1$ is the first standard basis vector of $\mathbb{R}^n$.
By multiplying this equation by $P_\cX$ on the right we obtain
\begin{align}
e_1^{\sf T} {\rm adj}(I - M_\cX) P_\cX
&= e_1^{\sf T} {\rm adj} \left( I - A_{\cX_0} \right) {\rm adj}(P_\cX) P_\cX \nonumber \\
&= \varrho^{\sf T} \det(P_\cX), \nonumber
\end{align}
and the result follows from \eqref{eq:xW}.
\end{proof}

The next result shows that if an invariant circle $C$ is stable, meaning {\em Lyapunov stable} \cite{Ea98},
the restriction of $f$ to $C$ is a degree-$1$ circle map.
Fig.~\ref{fig:nopeInvCircleUnstable} provides an example to show that stability is necessary.
The proof of Theorem \ref{th:circleMapHasDegreeOne} below
uses a connection between the admissibility of the fixed points $x^L$ and $x^R$
and their stability multipliers.
This type of argument seems to have first been employed by Feigin \cite{Fe78}.

\begin{figure}[h!]
\begin{center}
\includegraphics[width=15.5cm]{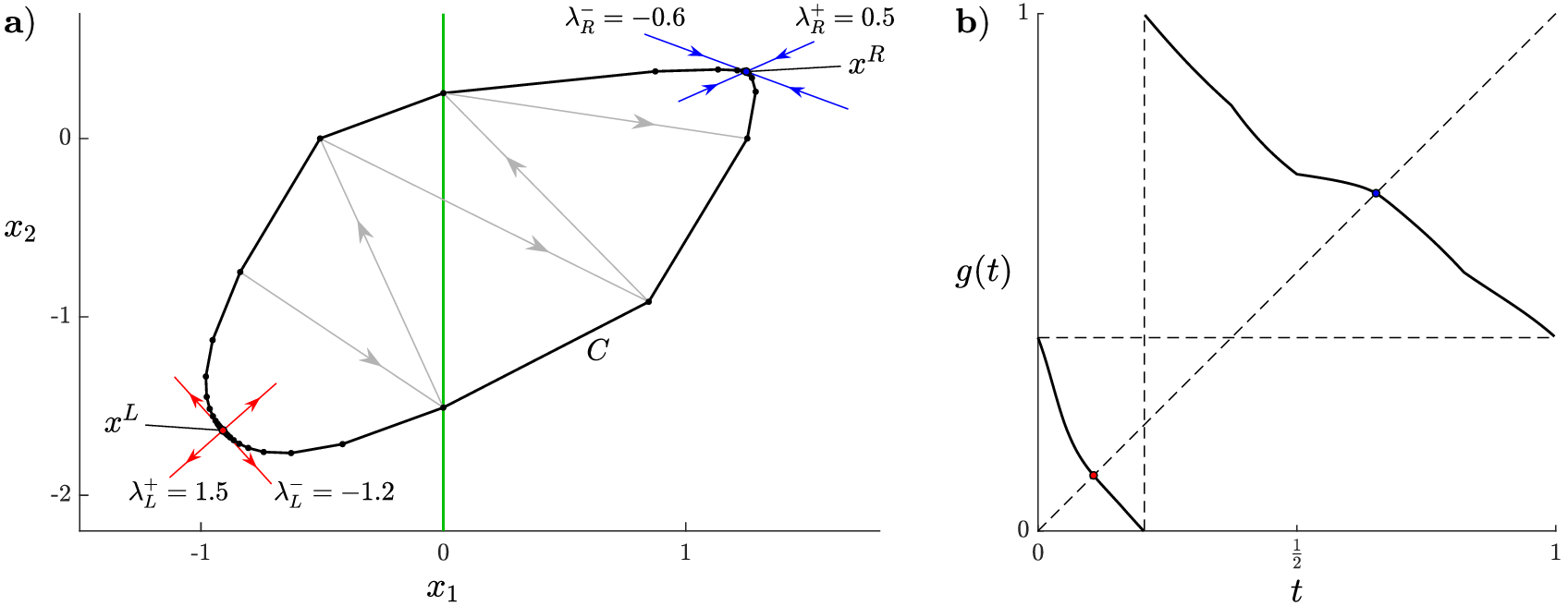}
\caption{
Panel (a) is a phase portrait of the two-dimensional border-collision normal form
\eqref{eq:pwl} with \eqref{eq:exampleNonrotational} and $(\tau_L,\delta_L,\tau_R,\delta_R) = (0.3,-1.8,-0.1,-0.3)$.
The map has an invariant circle $C$ 
and panel (b) shows that the restriction of the map to $C$ is a circle map $g$ with degree $-1$
(this map was constructed as in Fig.~\ref{fig:nopeInvCircleStable}).
Panel (a) also shows the fixed points $x^L$ and $x^R$, their stability multipliers,
and the associated eigendirections.
The circle $C$ is not stable because $x^L \in C$ is repelling.
\label{fig:nopeInvCircleUnstable}
} 
\end{center}
\end{figure}

\begin{theorem}
Let $f$ be a map \eqref{eq:pwl} with $\det(A_L) \det(A_R) > 0$,
$\det(I - A_L) \ne 0$, $\det(I - A_R) \ne 0$, and $\varrho^{\sf T} b \ne 0$,
where $\varrho^{\sf T}$ is the first row of ${\rm adj}(I - A_L)$.
If $f$ has a stable invariant circle $C$ then $f|_C$ is a degree-$1$ homeomorphism.
\label{th:circleMapHasDegreeOne}
\end{theorem}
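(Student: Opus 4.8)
The plan is to argue by contradiction on the degree. Since $\det(A_L)\det(A_R) > 0$ the map $f$ is a homeomorphism, so $f|_C$ is a homeomorphism of $\mathbb{S}^1$ and (as noted after Definition \ref{df:degree}) has degree $1$ or $-1$; I will rule out degree $-1$. The engine is a Feigin-type correspondence between the admissibility of the fixed points $x^L,x^R$ and the signs of $\det(I - A_L),\det(I - A_R)$, played off against the orientation information carried by the degree.

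First I would locate the fixed points lying on $C$. A fixed point $z$ of $f$ satisfies $f(z)=z$, and if $z_1<0$ then $z=x^L$, if $z_1>0$ then $z=x^R$, while $z_1=0$ is impossible because by \eqref{eq:xL1} the hypothesis $\varrho^{\sf T} b \ne 0$ forces $x^L_1 \ne 0$ (and likewise $x^R_1 \ne 0$ via \eqref{eq:xR1}). Hence $f$ has at most the two fixed points $x^L$ (admissible iff $x^L_1<0$) and $x^R$ (admissible iff $x^R_1>0$), and every fixed point of $f|_C$ is one of these. On the other hand, a lift $G$ of a degree-$(-1)$ homeomorphism is strictly decreasing with $G(t+1)=G(t)-1$, so $t\mapsto G(t)-t$ is strictly decreasing and drops by $2$ over each unit interval; consequently $f|_C$ has exactly two fixed points. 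Therefore both $x^L$ and $x^R$ must lie on $C$ and be admissible, giving $x^L_1<0<x^R_1$. Since \eqref{eq:xL1}--\eqref{eq:xR1} share the numerator $\varrho^{\sf T} b$, the inequality $x^L_1 x^R_1<0$ forces
\begin{equation}
\det(I - A_L)\,\det(I - A_R) < 0.
\nonumber
\end{equation}

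Next I would convert this into a parity statement. Writing $n_+(A)$ for the number of real eigenvalues of $A$ exceeding $1$ (counted with multiplicity) and factoring $\det(I - A)=\prod_i(1-\lambda_i)$, complex-conjugate pairs and real eigenvalues below $1$ contribute positive factors while each real eigenvalue above $1$ contributes a negative factor; as $\det(I-A_L),\det(I-A_R)\ne 0$ excludes the eigenvalue $1$, the sign of $\det(I-A)$ is $(-1)^{n_+(A)}$. The displayed inequality thus says $n_+(A_L)+n_+(A_R)$ is odd. The contradiction comes from showing both summands vanish. Because $x^L_1<0$ strictly, $x^L$ lies in the open region where $f$ coincides with the affine map $y\mapsto A_L(y-x^L)+x^L$, so near $x^L$ the circle $C$ is an invariant arc of $A_L$ through its fixed point. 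Lyapunov stability of $C$ forces the unstable eigenspace of $A_L$ (spanned by generalized eigenvectors of modulus $>1$) to be tangent to $C$, for otherwise points displaced off $C$ along an unstable direction transverse to $C$ would leave every neighbourhood of $C$. Since $C$ is one-dimensional this eigenspace is at most one-dimensional and, when nontrivial, is spanned by a real eigenvector whose eigenvalue $\mu_L$ equals the multiplier of $g=f|_C$ along $C$ at $x^L$. As $g$ reverses orientation it is locally orientation-reversing at its fixed point, whence $\mu_L<0$. Thus $A_L$ has no real eigenvalue exceeding $1$, i.e.\ $n_+(A_L)=0$, and the identical argument at $x^R$ gives $n_+(A_R)=0$. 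Their sum is then even, contradicting the parity just established, so the degree cannot be $-1$ and $f|_C$ is a degree-$1$ homeomorphism.

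The main obstacle is the last paragraph: the passage from Lyapunov stability of the \emph{set} $C$ to eigenvalue constraints at the fixed points, and in particular the sign $\mu_L<0$. This rests on the local linearity of $f$ at $x^L$ (legitimate precisely because $x^L_1\ne 0$) together with the fact that a globally orientation-reversing homeomorphism is orientation-reversing in a neighbourhood of each fixed point. Care is needed to guarantee that the tangent direction of the invariant arc $C$ at $x^L$ is a genuine real eigendirection of $A_L$ carrying the along-$C$ multiplier, and that a transverse eigendirection of modulus $>1$ really does carry orbits out of every neighbourhood of the one-dimensional set $C$. I would verify these points carefully, but the spectral and topological facts they rely on are standard.
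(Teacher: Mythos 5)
Your proposal is correct and follows essentially the same route as the paper: degree $-1$ forces both $x^L$ and $x^R$ onto $C$ as admissible fixed points, Feigin's sign argument via \eqref{eq:xL1}--\eqref{eq:xR1} then gives $\det(I-A_L)\det(I-A_R)<0$, hence a real eigenvalue exceeding $1$, which is incompatible with Lyapunov stability of $C$ because the orientation-reversing map $g$ cannot carry a one-sided (positive-eigenvalue) unstable direction. Your repackaging of the final step as a parity count $n_+(A_L)+n_+(A_R)$ is only a cosmetic variation on the paper's direct contradiction, though your explicit observation that a negative $\det(I-A)$ forces a \emph{real} eigenvalue above $1$ is stated more carefully than in the paper.
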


\begin{proof}
The map $f$ is a homeomorphism because $\det(A_L) \det(A_R) > 0$.
Thus $f|_C$ is a homeomorphism so its degree is $1$ or $-1$.
Let $g$ be a circle map conjugate to $f|_C$ with domain $[0,1)$.
Suppose for a contradiction the degree of $g$ is $-1$.
Then $g$ has two fixed points and these must correspond to fixed points of $f$.
But $\det(I - A_L)$ and $\det(I - A_R)$ are nonzero
so the only possible fixed points of $f$ are $x^L$ and $x^R$ given above.
Thus $x^L$ and $x^R$ belong to $C$ and are not virtual.
Since $\varrho^{\sf T} b \ne 0$, by \eqref{eq:xL1} and \eqref{eq:xR1} this requires $\det(I - A_L)$ and $\det(I - A_R)$ have opposite signs.
Since the determinant of a matrix is the product of its eigenvalues,
at least one of $I - A_L$ and $I - A_R$ has a negative eigenvalue, suppose $I - A_L$, without loss of generality.
Then $A_L$ has an eigenvalue $\lambda > 1$.
So $x^L$ is unstable and $f$ has an orbit that emanates from $x^L$ on one side of $x^L$ (because the eigenvalue is positive).
This orbit cannot be contained within $C$, because $g$ is decreasing
so orbits near the corresponding fixed point of $g$ visit both sides of the fixed point,
hence the orbit must emanate from $C$.
But this contradicts the assumption that $C$ is stable,
so the degree of $g$ is $1$.
\end{proof}

\section{Border-collision bifurcation of rotational periodic solutions}
\label{sec:bcbs}

Now we consider one-parameter families of piecewise-linear maps \eqref{eq:pwl}.
That is, we suppose $A_L$, $A_R$, and $b$ vary continuously with a parameter $\eta \in \mathbb{R}$
(while maintaining $A_L$ and $A_R$ differing in only their first columns).

Suppose that at some $\eta = \eta^*$ an admissible $\cX$-cycle undergoes a
BCB whereby its $k^{\rm th}$ point 
collides with the switching manifold.
For genericity assume $\det(I - M_\cX) \ne 0$ at $\eta = \eta^*$.
If the bifurcation occurs in a generic fashion
then the $\cX$-cycle is admissible on one side of the bifurcation and virtual on the other side of the bifurcation.

Further suppose $\det \left( I - M_{\cX^{\overline{k}}} \right) \ne 0$ at $\eta = \eta^*$.
Then, locally, there exists a unique $\cX^{\overline{k}}$-cycle
that coincides with the $\cX$-cycle at $\eta = \eta^*$.
Again, generically, the $\cX^{\overline{k}}$-cycle will be
admissible on one side of the bifurcation and virtual on the other side of the bifurcation.

From these observations we can identify two distinct cases, as done originally by Brousin {\em et al.}~\cite{BrNe63}.
The $\cX$ and $\cX^{\overline{k}}$-cycles are either admissible on different sides of the bifurcation,
or admissible on the same side of the bifurcation.
The first case is termed {\em persistence} because, if we only consider admissible solutions,
as we pass through the bifurcation a single periodic solution appears to persist.
The second case is termed a {\em nonsmooth fold} because, with the same mindset,
two periodic solutions appear to collide and annihilate, akin to a saddle-node bifurcation, or fold.

Now, motivated by Theorem \ref{th:perSolnIsRotational} and our desire to understand periodic solutions in Arnold tongues,
we suppose $\cX$ and $\cX^{\overline{k}}$ are rotational and correspond to the same rotation number $\frac{m}{p}$.
Lemma \ref{le:cF2} tells us what values of $k$ need to be considered.
The following result shows that the BCB must be a nonsmooth fold.
Notice it makes no assumptions on the stability of the periodic solutions or the existence of an invariant circle.

\begin{theorem}
Suppose the BCB at $\eta = \eta^*$ is generic in the sense that
$\det(I - M_\cX)$,
$\det \left( I - M_{\cX^{\overline{k}}} \right)$,
and $\varrho^{\sf T} b$ are nonzero at $\eta = \eta^*$,
and that the $\cX$ and $\cX^{\overline{k}}$-cycles
are each admissible on exactly one side of the bifurcation.
If $\cX = \cF[\ell,m,p]$ and $k = j d$, where $j \in \{ 0, \ell-1, \ell, p-1 \}$,
then the BCB at $\eta = \eta^*$ is a nonsmooth fold.
\label{th:bcbIsNSFold}
\end{theorem}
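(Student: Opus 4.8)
The plan is to reduce the dichotomy between persistence and nonsmooth fold to a comparison of the signs of $\det(I-M_\cX)$ and $\det\left(I-M_{\cX^{\overline{k}}}\right)$, and then to show that for the rotational words in question these signs are always opposite. First I would shift so that the colliding point is the $0^{\rm th}$ point: set $\cY=\sigma^k(\cX)$, so the collision occurs at $x^\cY$, and note $\sigma^k\left(\cX^{\overline{k}}\right)=\cY^{\overline{0}}$. The decisive structural observation is that the matrix $P_\cY$ defined by \eqref{eq:PcW} involves only $A_{\cY_1},\dots,A_{\cY_{p-1}}$ and not $A_{\cY_0}$, so flipping the $0^{\rm th}$ symbol leaves it unchanged: $P_{\cY^{\overline{0}}}=P_\cY$. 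The row $\varrho^{\sf T}$ is likewise common to $I-A_L$ and $I-A_R$. Hence, by Proposition \ref{pr:xW1}, the first components $x^\cY_1$ and $x^{\cY^{\overline{0}}}_1$ share the numerator $\det(P_\cY)\,\varrho^{\sf T}b$ and differ only through their denominators $\det(I-M_\cY)=\det(I-M_\cX)$ and $\det\left(I-M_{\cY^{\overline{0}}}\right)=\det\left(I-M_{\cX^{\overline{k}}}\right)$.

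Second, I would carry out the admissibility bookkeeping. Near $\eta^*$ only the colliding point changes the side of the switching manifold it occupies, so admissibility of each cycle is governed solely by the sign of the first component of its colliding point. Writing $N=\det(P_\cY)\,\varrho^{\sf T}b$, which vanishes and changes sign at $\eta^*$, and taking without loss of generality $\cY_0=L$ (so $\cY^{\overline{0}}_0=R$), the $\cX$-cycle is admissible where $N/\det(I-M_\cX)<0$ and the $\cX^{\overline{k}}$-cycle where $N/\det\left(I-M_{\cX^{\overline{k}}}\right)>0$. Comparing these conditions as $N$ changes sign, the two cycles are admissible on the same side of $\eta^*$, i.e.~the BCB is a nonsmooth fold, precisely when $\det(I-M_\cX)$ and $\det\left(I-M_{\cX^{\overline{k}}}\right)$ have opposite signs. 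This converts the theorem into a purely algebraic sign statement, which is exactly why no stability or invariant-circle hypothesis is needed.

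Third, I would use Lemma \ref{le:cF2}: for $j\in\{0,\ell-1\}$ the word $\cX^{\overline{jd}}$ is a shift of $\cF[\ell-1,m,p]$, while for $j\in\{\ell,p-1\}$ it is a shift of $\cF[\ell+1,m,p]$. Since $\det(I-M)$ is invariant under cyclic shifts of the word (using $\det(I-AB)=\det(I-BA)$), the four cases collapse to comparing $\det\left(I-M_{\cF[\ell,m,p]}\right)$ with $\det\left(I-M_{\cF[\ell-1,m,p]}\right)$ for all admissible $\ell$, the reverse transition being identical with $\ell$ replaced by $\ell+1$. Lemma \ref{le:cF3} can be invoked to confirm that the $j=0$ and $j=\ell-1$ collisions (and likewise $j=\ell$ and $j=p-1$) are genuinely the same bifurcation up to relabelling, so it suffices to treat a single such comparison.

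Finally, the heart of the argument is to show that consecutive determinants differ in sign. Here I would exploit the rank-one structure $A_R-A_L=u\,e_1^{\sf T}$: writing $Q=A_{\cY_{p-1}}\cdots A_{\cY_1}$ we have $M_{\cY^{\overline{0}}}=M_\cY+Qu\,e_1^{\sf T}$, so the matrix determinant lemma gives $\det\left(I-M_{\cY^{\overline{0}}}\right)=\det(I-M_\cY)\,(1-\mu)$ with $\mu=e_1^{\sf T}(I-M_\cY)^{-1}Qu$. The two determinants therefore have opposite signs exactly when $\mu>1$, equivalently when the linear interpolation $\det\!\big(I-M_\cY-tQu\,e_1^{\sf T}\big)$ vanishes for some $t\in(0,1)$, i.e.~when a single real eigenvalue of the interpolating cycle matrix crosses $+1$. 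I expect this final sign determination to be the main obstacle, since it is where the rotational structure must genuinely enter: by Lemma \ref{le:ordering} and the derivation preceding Definition \ref{df:cF}, the four admissible colliding points are exactly the points flanking the two crossings of the switching manifold in the rotational ordering, so the colliding point and its rotational neighbour lie on opposite sides of the manifold with the crossing direction fixed by the transversality hypothesis. Making this geometric forcing of $\mu>1$ rigorous in the general $n$-dimensional setting—rather than appealing to the stable case, where positivity of $\det(I-M)$ for an attracting cycle and negativity for its saddle partner is immediate—is the crux of the proof.
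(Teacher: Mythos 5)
Your first two steps are exactly the paper's reduction: by Proposition \ref{pr:xW1} and the fact that $P_{\cX^{\overline{k}}}=P_{\cX}$ when the flipped symbol is the leading one, the colliding points of the two cycles share the numerator $\det(P)\,\varrho^{\sf T}b$, which changes sign at $\eta^*$, so the bifurcation is a nonsmooth fold precisely when $\det(I-M_\cX)$ and $\det\bigl(I-M_{\cX^{\overline{k}}}\bigr)$ have opposite signs. Up to that point the bookkeeping is correct. The problem is that the final sign determination --- the entire content of the theorem once the reduction is made --- is not actually proved. You set up the matrix determinant lemma, reduce the question to whether $\mu=e_1^{\sf T}(I-M_\cY)^{-1}Qu$ exceeds $1$, and then explicitly defer: ``Making this geometric forcing of $\mu>1$ rigorous \dots is the crux of the proof.'' That is a genuine gap, not a routine verification; nothing in your argument shows $\mu>1$, and the eigenvalue-crossing reformulation does not obviously connect to the rotational structure of $\cX$. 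Appealing to the stable/saddle picture would also reintroduce exactly the invariant-circle hypotheses the theorem avoids.

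The paper closes this gap with a short combinatorial-algebraic argument that your proposal does not contain. Apply Proposition \ref{pr:xW1} not only to the colliding point but to a second, non-colliding point of each cycle: the $((\ell-1)d)^{\rm th}$ point of the $\cX$-cycle and the $(\ell d)^{\rm th}$ point of the $\cX^{\overline{0}}$-cycle. Their first components are
$\det\bigl(P_{\sigma^{(\ell-1)d}(\cX)}\bigr)\varrho^{\sf T}b/\det(I-M_\cX)$ and
$\det\bigl(P_{\sigma^{\ell d}(\cX^{\overline{0}})}\bigr)\varrho^{\sf T}b/\det\bigl(I-M_{\cX^{\overline{0}}}\bigr)$,
and the identity $\sigma^{\ell d}\bigl(\cX^{\overline{0}}\bigr)=\sigma^{(\ell-1)d}(\cX)^{\overline{0}}$ of Lemma \ref{le:cF3} (which you cite only to collapse cases, not for this purpose) shows the two numerators coincide, again because $P$ is insensitive to the leading symbol. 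Since $\cX_{(\ell-1)d}=L$ and $\cX^{\overline{0}}_{\ell d}=R$, admissibility forces these two quantities to have opposite signs, so the denominators differ in sign and the bifurcation is a nonsmooth fold. This is where the rotational structure genuinely enters, and it is the step your proposal identifies as the obstacle but does not supply.
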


\begin{proof}
By symmetry it suffices to consider $j = 0$.
Let $\{ y^{(0)}, y^{(1)}, \ldots, y^{(p-1)} \}$ denote the $\cX$-cycle
and $\{ z^{(0)}, z^{(1)}, \ldots, z^{(p-1)} \}$ denote the $\cX^{\overline{0}}$-cycle.
These vary continuously with $\eta$ in a neighbourhood of $\eta^*$,
and at $\eta = \eta^*$ we have $y^{(i)} = z^{(i)}$ for each $i$.
At $\eta = \eta^*$ we also have $y^{(i)}_1 = z^{(i)}_1 = 0$.

By \eqref{eq:xW1} the first components of $y^{(0)}$ and $z^{(0)}$ are
\begin{align}
y^{(0)}_1 &= \frac{\det(P_\cX) \varrho^{\sf T} b}{\det(I - M_\cX)}, \nonumber \\
z^{(0)}_1 &= \frac{\det(P_\cX) \varrho^{\sf T} b}{\det(I - M_{\cX^{\overline{0}}})}, \nonumber
\end{align}
where in the second equation we have used the fact that $P_{\cX^{\overline{0}}} = P_\cX$ by \eqref{eq:PcW}.
Thus, by genericity, the value of $\det(P_\cX)$ changes sign at $\eta = \eta^*$.
Hence the bifurcation corresponds to persistence if $\det(I - M_\cX)$ and $\det(I - M_{\cX^{\overline{0}}})$
have the same sign and is a nonsmooth fold if they have different signs.

Equation \eqref{eq:xW1} also gives
\begin{align}
y^{((\ell - 1)d)}_1 &= \frac{\det \left( P_{\sigma^{(\ell-1)d}(\cX)} \right) \varrho^{\sf T} b}{\det(I - M_\cX)}, \label{eq:y9} \\
z^{(\ell d)}_1 &= \frac{\det \left( P_{\sigma^{\ell d} \left( \cX^{\overline{0}} \right)} \right) \varrho^{\sf T} b}{\det \left( I - M_{\cX^{\overline{0}}} \right)}. \label{eq:y19}
\end{align}
In the denominator of \eqref{eq:y9}
we have used the fact that the determinant of $I - M_{\sigma^i(\cX)}$ is independent of $i$,
and similarly for the denominator of \eqref{eq:y19}.
To \eqref{eq:y19} we apply \eqref{eq:id2} giving
\begin{equation}
z^{(\ell d)}_1 = \frac{\det \left( P_{\sigma^{(\ell-1)d}(\cX)} \right) \varrho^{\sf T} b}
{\det \left( I - M_{\cX^{\overline{0}}} \right)}, \label{eq:z9}
\end{equation}
where we have again used \eqref{eq:PcW} to drop the $\overline{0}$.
But $\cX_{(\ell - 1)d} = L$, so $y^{((\ell-1)d)}_1 < 0$ by the admissibility assumption,
and $\cX^{\overline{0}}_{\ell d} = R$, so $z^{(\ell d)}_1 > 0$ also by admissibility.
Thus by \eqref{eq:y9} and \eqref{eq:z9}
the signs of $\det(I - M_\cX)$ and $\det(I - M_{\cX^{\overline{0}}})$ are different, hence the BCB is a nonsmooth fold.
\end{proof}

\section{Discussion}
\label{sec:conc}

The dynamics of two-piece, piecewise-linear maps can be incredibly complex \cite{Si23c},
but of course there are limits to this complexity, as they have an exceedingly simple form,
and in this paper we have found new ways to bring these limits to light.
We have utilised the fact that such maps
cannot have more than one asymptotically stable fixed point
to show that stable invariant circles have degree $1$.
By combining this with the classical theory of circle homeomorphisms we have
provided new insights into the sausage-string structure.
The results show that two parts of an Arnold tongue
corresponding to consecutive values of $\ell$ cannot be joined by a codimension-one curve of persistence-type BCBs.

It remains to see if similar ideas can advance
our understanding of other bifurcation structures
that arise in families of piecewise-smooth maps.
For one-dimensional piecewise-smooth maps the structures are in the large well understood
through renormalisation, asymptotic calculations, and other means \cite{GrAl17,AvGa19},
but in higher dimensions several new structures have recently been described \cite{CaPa23,GaRa23,Si20}
and that remain to be fully understood.

\section*{Acknowledgements}

This work was supported by Marsden Fund contract MAU2209 managed by Royal Society Te Ap\={a}rangi.

\appendix
\section{Proof of Lemma \ref{le:ordering}}
\label{app:a}

Let $G$ be a lift of $g$ and let $r \in \mathbb{Z}$ be as in \eqref{eq:G}.
Since $\xi$ is period-$p$ with $m$ points in $[c,1)$,
\begin{equation}
G^p(s) = s + r p + m,
\label{eq:orderingProof1}
\end{equation}
so \eqref{eq:rotationNumber} with $t = s$ gives $\rho = \frac{m}{p}$.

We now define a sequence $S = \left\{ w^{(i)} \right\}_{i \in \mathbb{Z}}$ of points in $\mathbb{R}$ as follows.
First let $w^{(0)},\ldots,w^{(p-1)}$ be the points of $\xi$ in ascending order, i.e.
\begin{equation}
s = w^{(0)} < w^{(1)} < \cdots < w^{(p-1)}.
\nonumber
\end{equation}
Then for each $q \in \{ 0,1,\ldots,p-1 \}$ and $k \in \mathbb{Z}$ let $w^{(q + k p)} = w^{(q)} + k$.
Notice $S$ is an increasing sequence,
i.e.~$w^{(i)} < w^{(j)}$ if and only if $i < j$.

For all $i \in \mathbb{Z}$ there exists $a_i \in \mathbb{Z}$ such that $G(w^{(i)}) = w^{(i + a_i)}$.
We will now show that $a_i$ is independent of $i$.
Choose any $i, j \in \mathbb{Z}$ with $i < j$.
The number of points in $S$ between $w^{(i)}$ and $w^{(j)}$ is $j-i-1$.
Since $G$ is increasing this is the same as the number of points in $S$ between
$G(w^{(i)}) = w^{(i + a_i)}$ and $G(w^{(j)}) = w^{(j + a_j)}$.
But the number of points in $S$ between
$w^{(i + a_i)}$ and $w^{(j + a_j)}$ is $j + a_j - (i + a_i) - 1$,
and matching the two numbers gives $a_i = a_j$.	

Thus $a_i$ is independent of $i$, call it $a$.
Notice $G(s) = G(w^{(0)}) = w^{(a)}$,
so $G^i(s) = w^{(i a)}$ for all $i \in \mathbb{Z}$,
and in particular $G^p(s) = w^{(p a)} = s + a$.
By matching this to \eqref{eq:orderingProof1} we obtain $a = r p + m$.
Thus
\begin{equation}
g^i(s) = w^{(i a \,{\rm mod}\, p)} = w^{(i m \,{\rm mod}\, p)},
\nonumber
\end{equation}
for each $i \in \{ 0,1,\ldots,p-1 \}$.
From this we observe $m$ and $p$ are coprime
because $\xi$ consists of $p$ distinct points.
Also
\begin{equation}
g^{j d}(s) = w^{(j d m \,{\rm mod}\, p)} = w^{(j)},
\nonumber
\end{equation}
for each $j \in \{ 0,1,\ldots,p-1 \}$ giving \eqref{eq:ordering} as required.
\hfill $\Box$


\end{document}